\newtheorem{thm}{Theorem}[section]
\newtheorem{cor}[thm]{Corollary}
\newtheorem{lem}[thm]{Lemma}
\newtheorem{pro}[thm]{Proposition}
\theoremstyle{definition}
\newtheorem{rem}[thm]{Remark}
\numberwithin{equation}{section}
\newtheoremstyle{case}{}{}{}{}{}{:}{ }{}
\theoremstyle{case}
\newtheorem{case}{\textbf{Case}}
\newcommand{\X}{\mathbb{X}}
\newcommand{\ex}{\mathbb{E}}
\newcommand{\Y}{\mathbb{Y}}
\newcommand\Item[1][]{%
  \ifx\relax#1\relax  \item \else \item[#1] \fi
  \abovedisplayskip=0pt\abovedisplayshortskip=0pt~\vspace*{-\baselineskip}}
\begin{document}

\baselineskip=16pt

\title[On Fekete polynomials]{$L_q$ norms and Mahler measure of Fekete polynomials}

\author{Oleksiy Klurman}
\address{School of Mathematics, University of Bristol, UK}

\email{oleksiy.klurman@bristol.ac.uk}

\author{Youness Lamzouri}

\address{Institut \'Elie Cartan de Lorraine, Universit\'e de Lorraine, BP 70239, 54506 Vandoeuvre-l\`es-Nancy Cedex, France}

\email{youness.lamzouri@univ-lorraine.fr}

\author{Marc Munsch}
\address{Università degli Studi di Genova - Dipartimento di Matematican, Via Dodecaneso, 35, 16146 Genova}
\email{munsch@dima.unige.it}

\date{}

\subjclass[2010]{Primary 11C08, 30C10, 60G50, 11M06; Secondary 42A05, 11L40.}

\keywords{Dirichlet character, $L$-function, Littlewood polynomials, Mahler measure, $L_p$- norm, random series of functions, random point process.}

\maketitle

\begin{abstract}
We show that the distribution of the values of Fekete polynomials $F_p$ on the unit circle is governed, as $p\to\infty$,  by an explicit limiting (non-Gaussian) random point process.\\ This allows us to prove that the Mahler measure of $F_p$ satisfies  
$$M_0(F_p)\sim k_0\sqrt{p},$$
as  $p\to\infty$ where $k_0=0.74083\dots,$
thus solving an old open problem.\\
Further, we obtain an asymptotic formula for all moments $\|F_p\|_q$ with $0<q<\infty,$ resolving another open problem and improving previous results of G\"{u}nther and Schmidt (who treated the case $q=2k,$ $k\in\mathbb{N}$).\\
\end{abstract}

\section{Introduction}
Erd\H{o}s and Littlewood explored various extremal problems concerning polynomials with coefficients $\pm 1$ (see in particular Littlewood’s delightful monograph \cite{littlewood1968some}) which spurred intensive research throughout the past seventy years. Here, we mention recent breakthrough results solving Littlewood's conjecture on the existence of flat polynomials \cite{flatexist,beck,BBflat,Kahane} (see also \cite{Borweinexcursion,borweinBarker,Erdospb,Mont-Litt,newman,odlyzko} and the references therein for a historical account of such problems and the link with Barker sequences and several related engineering problems).\\ Among various classes of Littlewood polynomials two specific examples
attracted particular attention: the Fekete and the Rudin-Shapiro polynomials \cite{ErdMahler1,Erdunitcircle,jedwab2013littlewood,Rodgers,saffariRudin}.
For a given prime $p,$ we recall that the corresponding Fekete polynomial is defined by
$$F_p(z):=\sum_{n=1}^{p-1}\left(\frac{n}{p}\right)z^n,$$
where $\left(\frac{\cdot}{p}\right)$ is the Legendre symbol modulo $p$.
The family of Fekete polynomials appears frequently in the context of both number theoretic and analytic problems and has been extensively studied for over a century, see \cite{BaMo}, \cite{BPW}, \cite{BSh}, \cite{BC}, \cite{borwein2001extremal}, \cite{BC-merit}, \cite{Chow}, \cite{FeketePolya}, \cite{Heilbr}, \cite{Hoholdtmerit}, \cite{JKS}, \cite{jedwab2013littlewood}, \cite{JHH}, \cite{Mont-large} for some of the results.
Dirichlet was perhaps the first to discover the identity
\[L\left(s,\left(\frac{\cdot}{p}\right)\right)\Gamma(s)=\int_0^1 \frac{(-\log x)^{s-1}}{x}\frac{F_p(x)}{1-x^p}dx,\] where $L(s,\chi)$ is the Dirichlet $L$-function associated to a Dirichlet character $\chi$ modulo $p$.
 Consequently, if $F_p$ has no zeros for $0<x<1,$ then $L\left(s,\left(\frac{\cdot}{p}\right)\right)>0$ for all $0<s<1,$ which in turn refute  the existence of a putative Siegel zero. The positivity of $L(s,\chi_d)$ however (where $\chi_d$ is the Kronecker symbol associated to a fundamental discriminant $d$), is only known for a positive proportion of fundamental discriminants $d$ by the work of Conrey and Soundararajan  \cite{ConreySound}.

\subsection{The Mahler measure of Fekete polynomials} A classical quantity related to the distribution of complex zeros of an algebraic polynomial $P\in\mathbb{C}[x]$ is the Mahler measure of $P$ given by
\[M_0(P):=\exp\left(\int_{0}^1\log |P(e(t))|dt\right),\]
where $e(t):= e^{2\pi i t}$.
The problem of determining  an asymptotic formula for the Mahler measure of Fekete polynomials has attracted considerable attention.
Littlewood \cite{Littlewood-Mahler} was the first to prove a general upper bound for the Mahler measure of Littlewood polynomials, from which one has
$M_0(F_p)\le (1-\varepsilon_0)\sqrt{p-1}$ for some small fixed $\varepsilon_0>0.$
Using subharmonic methods, Erd\'elyi and Lubinskii~\cite{LE} established the lower bound $M_0(F_p)\ge (\frac{1}{2}-\varepsilon)\sqrt{p},$ which has since been improved in~\cite{Lower-Mah} to $M_0(F_p)\ge (\frac{1}{2}+c_1)\sqrt{p},$ for some small value of $c_1>0.$
In \cite{ErdMahler1} (see also recent survey~\cite{ErdSurv}), Erd\'elyi writes the following about the problem of finding an asymptotic formula for $M_0(F_p)$:\\
\begin{center}
``{\it \dots this problem seems to be beyond reach at the moment. Not even a (published or unpublished)
conjecture about the asymptotic seems to be known."}\\
\end{center}
Our first result resolves this problem.
\begin{thm}\label{cor: Mahnorms}
Let ${G}_{\mathbb{X}}$ be the random process on $C[0,1]$ defined by
\begin{equation}\label{process}
G_{\X}(t):= \sum_{m\in \mathbb{Z}} \frac{e(t)-1}{2\pi i (m-t)} \X(m), \hspace{2mm}t\in[0,1]
\end{equation}
where $\{\X(m)\}_{m\in \mathbb{Z}}$ are I. I. D. random variables taking the values $\pm 1$ with equal probability $1/2$.  Then we have
\[\lim_{p\to \infty}\frac{M_0(F_p)}{\sqrt{p}}= k_0:=\int_{0}^1\mathbb{E}\left( \log |{G}_{\mathbb{X}}(t)|\right)dt,\]
where the constant $k_0=0.74083\dots$ is effectively computable (see Remark \ref{rem:cst} for details).
\end{thm}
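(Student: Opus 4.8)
The plan is to deduce the theorem from the process-level convergence established earlier in the paper, namely that the rescaled Fekete polynomial converges in distribution, as a random element of $C[0,1]$, to $G_{\X}$; the only genuine difficulty will be upgrading this weak convergence to convergence of the (unbounded, discontinuous) functional $f\mapsto\int_0^1\log|f(t)|\,dt$. First I would localize the Mahler measure: splitting $[0,1)$ into the arcs $[n/p,(n+1)/p)$ and substituting $\theta=(n+t)/p$ gives
\[
\log\frac{M_0(F_p)}{\sqrt p}=\int_0^1\log\frac{|F_p(e(\theta))|}{\sqrt p}\,d\theta=\frac1p\sum_{n=0}^{p-1}\int_0^1\log|X_p(t)|\,dt=\ex_N\!\left[\int_0^1\log|X_p(t)|\,dt\right],
\]
where $X_p(t):=p^{-1/2}F_p\big(e((N+t)/p)\big)$ and $N$ is uniform on $\{0,\dots,p-1\}$. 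Thus it suffices to prove $\ex_N\big[\int_0^1\log|X_p(t)|\,dt\big]\to\int_0^1\ex\log|G_{\X}(t)|\,dt$; exponentiating then gives $M_0(F_p)/\sqrt p\to\exp(\int_0^1\ex\log|G_{\X}(t)|\,dt)=k_0=0.74083\ldots$. Note that $X_p$ agrees with the object of the limit theorem up to the deterministic unimodular factor $\tau/\sqrt p$ (the normalized Gauss sum), which is immaterial since only $|X_p|$ enters.

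Next I would pass to the limit by truncation. For $M>0$ set $\Psi_M(f):=\int_0^1 T_M(\log|f(t)|)\,dt$, where $T_M$ truncates to $[-M,M]$. Each $\Psi_M$ is a bounded continuous functional on $C[0,1]$, so the weak convergence $X_p\Rightarrow G_{\X}$ yields $\ex\,\Psi_M(X_p)\to\ex\,\Psi_M(G_{\X})$ for every fixed $M$, while $\ex\,\Psi_M(G_{\X})\to\int_0^1\ex\log|G_{\X}(t)|\,dt$ as $M\to\infty$ by dominated convergence (using $\ex\log^{\pm}|G_{\X}(t)|<\infty$, the lower part following from anti-concentration of the absolutely convergent series $G_{\X}(t)$). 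It then remains to control the two truncation errors uniformly in $p$. The upper tail is easy: since $\ex_N\int_0^1|X_p(t)|^2dt=\frac1p\int_0^1|F_p(e(\theta))|^2d\theta=(p-1)/p\le1$, one has $\ex_N\int_0^1(\log^+|X_p|-M)^+dt\le e^{-M}\to0$ uniformly in $p$.

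The main obstacle is the lower tail, that is, showing $\sup_p\ex_N\int_0^1(\log^-|X_p(t)|-M)^+dt\to0$ as $M\to\infty$, which quantifies the requirement that $F_p$ is rarely small on the unit circle. A uniform $L^1$ bound is immediate from the known lower bound $M_0(F_p)\ge(\tfrac12+c_1)\sqrt p$, since $\int_0^1\log^-|F_p/\sqrt p|=\int_0^1\log^+|F_p/\sqrt p|-\log(M_0/\sqrt p)\le 1-\log(\tfrac12+c_1)$. But uniform integrability across $p$ requires the stronger small-value estimate
\[
\mathrm{meas}\big\{\theta\in[0,1]:|F_p(e(\theta))|<\lambda\sqrt p\big\}\ll\lambda^{c}\qquad(0<\lambda<1),
\]
uniformly in $p$, equivalently a uniform negative-moment bound $\int_0^1|F_p(e(\theta))/\sqrt p|^{-s}\,d\theta=O(1)$ for some fixed $s>0$.

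Weak convergence alone yields this only in the limit for fixed $\lambda$, not uniformly as $\lambda\to0$, so I would establish it directly from the local model. The Gauss-sum expansion writes $X_p(t)$, up to a small error and a unimodular factor, as $\sum_m\big(\tfrac{m-N}{p}\big)\tfrac{e(t)-1}{2\pi i(m-t)}$ after reindexing; a quantitative equidistribution of short patterns of Legendre symbols (via Weil/Burgess-type bounds) then shows that, over the random $N$, this linear combination has a density near the origin that is bounded uniformly in $p$, giving $\pr_N(|X_p(t)|<\lambda)\ll\lambda^{c}$ and hence the displayed measure bound. Granting this estimate, uniform integrability holds, the $\liminf$ and $\limsup$ of $\ex_N[\int_0^1\log|X_p|\,dt]$ both equal $\int_0^1\ex\log|G_{\X}(t)|\,dt$, and exponentiating proves the theorem. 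I expect this uniform anti-concentration to be the technical heart, mirroring the fact that historically the lower bound for $M_0(F_p)$ was the hard direction.
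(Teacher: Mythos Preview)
Your overall architecture --- localize to arcs of length $1/p$, view the rescaled polynomial as a random element of $C[0,1]$, invoke the weak convergence to $G_{\X}$, and then upgrade via truncation --- is exactly the framework the paper uses. Your upper-tail control via the $L^2$ bound is also fine. The divergence is entirely in how the lower tail (uniform log-integrability) is handled.

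The paper does \emph{not} prove an anti-concentration bound of the form $\pr_N(|X_p(t)|<\lambda)\ll\lambda^c$. Instead it works pathwise: after factoring out the deterministic $(e(t)-1)$ and writing $H_p(k,t)=\sum_{|m|\le(p-1)/2}\big(\tfrac{k-m}{p}\big)\frac{2\pi i}{p(e((m-t)/p)-1)}$, it shows by an explicit case analysis that for \emph{every} $k$ one has either $|H_p'(k,t)|\ge c>0$ or $|H_p''(k,t)|\ge c>0$ on $(0,1)$, uniformly in $p$. A van der Corput--type argument (sublevel sets of functions with large first or second derivative are small) then gives $\int_0^1\log|H_p(k,t)|\,\mathbbm{1}_{|H_p(k,t)|<\varepsilon}\,dt=O(\varepsilon^{6/25})$ for each $k$ individually, and a separate lemma handles the endpoints $t\in[0,\varepsilon]\cup[1-\varepsilon,1]$. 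This is stronger than what you propose (pathwise rather than in expectation) and requires no probabilistic input beyond the weak convergence already established.

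Your route --- transfer the small-ball problem to the Rademacher model via Weil and then invoke a density bound --- is plausible in spirit but the sketch has a gap. Weil bounds give equidistribution of a \emph{fixed finite} pattern $\big(\big(\tfrac{k-m}{p}\big)\big)_{|m|\le M}$, which lets you match moments; they do not by themselves produce small-ball probabilities for the full (length $\asymp p$) sum. The assertion of ``a density near the origin that is bounded uniformly in $p$'' is literally false for finite $p$ (the law of $X_p(t)$ is discrete) and is also delicate in $t$: the coefficients $c_m(t)=\frac{e(t)-1}{2\pi i(m-t)}$ are all real multiples of a single complex number, so $X_p(t)$ lives on a line and any density argument must be one-dimensional; moreover the coefficient profile degenerates as $t\to0,1$. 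What you would actually need is a Littlewood--Offord / Kolmogorov--Rogozin type inequality for the real Rademacher sum $\sum_m \X(m)/(m-t)$, uniform in $t\in(0,1)$, together with a quantitative transfer to the Legendre-symbol model. That can likely be done, but it is a genuine additional argument, not a consequence of ``Weil/Burgess-type bounds''. The paper's derivative approach sidesteps all of this.
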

\begin{rem}\label{rem:mahler}
     There are a series of related results concerning the average behaviour of the Mahler measure for a typical Littlewood polynomial (with coefficients $\pm 1$). Here, a general result due to Choi and Erdélyi~\cite{Aveg-Mah}, relying on a deep work of Konyagin and Schlag~\cite{Kon-Sch}, states that on average
\begin{equation}\label{Mahlaverage} \lim_{n\to\infty}\frac{1}{2^{n+1}}\sum_{f\in \mathcal{L}_n}\frac{M_0(f)}{\sqrt{n}}=e^{-\gamma/2}=0.749306\dots, \end{equation} where
$\mathcal{L}_n$ denotes the set of polynomials of degree $n$ with $\pm 1$ coefficients. Our constant $k_0$ differs from the one in \eqref{Mahlaverage} and from the previous examples in the literature. On a related note, relying on a beautiful result of Rodgers \cite{Rodgers},
Erdélyi \cite{ErdMahler1} recently proved Saffari's conjecture about the  asymptotic behavior for the Mahler measure of the Rudin-Shapiro polynomials $P_k(z)$ (of degree $2^k-1$), which states that 
$$ \lim_{k\to \infty} \frac{M_0(P_k)}{\sqrt{2^{k+1}}}= e^{-1/2}.$$
\end{rem}
\subsection{$L_q$ norms of Fekete polynomials} A well-known result of Borwein and Lockhart~\cite{Bo-Lo} states that for every $0<q<\infty,$
\[\lim_{n\to\infty}\frac{1}{2^{n+1}}\sum_{f\in \mathcal{L}_n}\frac{M_q(f)}{\sqrt{n}}=\Gamma\left(1+\frac{q}{2}\right)^{1/q},\]
where 
$$M_q(P):=\left(\int_0^1|P(e(t))|^q dt\right)^{1/q}.$$
In the case of Fekete polynomials, the analogous problem has been extensively studied.
Interestingly, Borwein and Choi~\cite{BC} found the exact expressions for $\vert|F_p\vert|_4^4$ in terms
of the class number of $\mathbb{Q}[\sqrt{-p}].$ Erd\'elyi~\cite{E-Lpgrowth} determined the exact order of the growth ($\asymp\sqrt{p}$) of $M_q(F_p)$ for every $q>0.$
These investigations culminated in the work of G\"unther and Schmidt~\cite{GS-Lp}, who showed that for every integer $k\geq 1$ we have
$$\lim_{p\to \infty}\frac{M_{2k}(F_p)}{\sqrt{p}}= (\ell_{2k})^{1/2k},$$ where the constant $\ell_{2k}$ is given via a series of rather complicated recursive combinatorial identities (Theorem 2.1 in~\cite{GS-Lp}). Our next corollary recovers their result and extends it to all values $q>0$ via a different method.
\begin{cor}\label{cor:lqnorms}
Let ${G}_{\mathbb{X}}(t)$ be the random process defined in \eqref{process}. Then for any $0<q<\infty,$ we have
\[\lim_{p\to\infty}\frac{M_q(F_p)}{\sqrt{p}}=k_q:=\left(\int_{0}^1\mathbb{E} (|G_{\mathbb{X}}(t)|^q) dt\right)^{1/q}.\]
\end{cor}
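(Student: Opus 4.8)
The plan is to deduce the corollary from the same distributional input that underlies Theorem \ref{cor: Mahnorms}, namely the convergence in law of the normalized values of $F_p$ to the limiting process $G_{\X}$. Let $t$ be chosen uniformly at random in $[0,1]$ and set $Y_p:=F_p(e(t))/\sqrt p$; let $Y:=G_{\X}(\beta)$, where $\beta$ is uniform on $[0,1]$ and independent of the signs $\X$. The first, purely formal, step is to rewrite both sides of the claimed identity as absolute moments. On the one hand
\[
\frac{M_q(F_p)^q}{p^{q/2}}=\int_0^1\Bigl|\frac{F_p(e(t))}{\sqrt p}\Bigr|^q\,dt=\mathbb{E}\bigl[|Y_p|^q\bigr],
\]
and on the other hand, by Tonelli,
\[
k_q^{q}=\int_0^1\mathbb{E}\,|G_{\X}(t)|^{q}\,dt=\mathbb{E}\bigl[|Y|^q\bigr].
\]
Thus the corollary is exactly the assertion that the $q$-th absolute moments converge, $\mathbb{E}|Y_p|^q\to\mathbb{E}|Y|^q$, and the whole point is to upgrade the distributional convergence $Y_p\Rightarrow Y$ to convergence of these moments.

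I would obtain this upgrade in the standard way. Since $z\mapsto|z|^q$ is continuous on $\mathbb{C}$, the continuous mapping theorem gives $|Y_p|^q\Rightarrow|Y|^q$. Weak convergence does not by itself force convergence of expectations, so the crux is to establish uniform integrability of the family $\{|Y_p|^q\}_p$. By the de la Vall\'ee--Poussin criterion it suffices to produce a single exponent $q'>q$ with
\[
\sup_p\mathbb{E}|Y_p|^{q'}=\sup_p\frac{M_{q'}(F_p)^{q'}}{p^{q'/2}}<\infty ,
\]
i.e.\ an upper bound $M_{q'}(F_p)=O(\sqrt p)$. This is furnished by Erd\'elyi's order-of-magnitude result $M_{q'}(F_p)\asymp\sqrt p$, valid for every $q'>0$ \cite{E-Lpgrowth}; choosing any $q'>q$ makes the displayed supremum finite. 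Granting the uniform bound, the theorem that weak convergence together with uniform integrability implies convergence of expectations yields $\mathbb{E}|Y_p|^q\to\mathbb{E}|Y|^q$, which is precisely $M_q(F_p)/\sqrt p\to k_q$.

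It remains to check that the limit is genuinely finite, i.e.\ that $k_q<\infty$. Writing $a_m(t):=\frac{e(t)-1}{2\pi i(m-t)}$ for the coefficients of $G_{\X}$, one has $|a_m(t)|^2=\frac{\sin^2(\pi t)}{\pi^2(m-t)^2}$, and the classical identity $\sum_{m\in\mathbb{Z}}(m-t)^{-2}=\pi^2/\sin^2(\pi t)$ gives $\sum_{m\in\mathbb{Z}}|a_m(t)|^2=1$ for every $t\in[0,1]$; in particular the $m=0$ term stays bounded as $t\to0$ because $e(t)-1\sim2\pi i t$. Hence $G_{\X}(t)$ is a convergent sum of independent bounded variables with square-summable coefficients, so it is sub-Gaussian with all moments finite uniformly in $t$, and $k_q=\bigl(\int_0^1\mathbb{E}|G_{\X}(t)|^q\,dt\bigr)^{1/q}<\infty$. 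As a sanity check, $\mathbb{E}|G_{\X}(t)|^2=\sum_m|a_m(t)|^2=1$ gives $k_2=1$, which matches Parseval's identity $M_2(F_p)^2=p-1\sim p$.

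The only serious difficulty is upstream of the corollary: it is the distributional convergence $Y_p\Rightarrow Y$, which is the main theorem the paper must establish and which I take as given here. Relative to the Mahler-measure Theorem \ref{cor: Mahnorms}, where the integrand $\log|F_p|$ is unbounded below and one must control $F_p$ near its zeros (and argue uniform integrability of $\log$ from below), the present case $q>0$ is genuinely easier: $z\mapsto|z|^q$ is bounded near the origin, so no local singularity intervenes and only the contribution of atypically large values must be tamed --- which is exactly what the higher-moment bound $M_{q'}(F_p)\ll\sqrt p$ accomplishes.
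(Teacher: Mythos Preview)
Your proof is correct. The paper's own argument is shorter but also somewhat less careful: it simply observes that $\ell_q(\phi)=\int_0^1|\phi(t)|^q\,dt$ is a continuous functional on $C[0,1]$, rewrites $\int_0^1|F_p(e(t))|^q\,dt=p^{q/2}\cdot\frac1p\sum_{k=0}^{p-1}\ell_q(G_p(k,\cdot))$, and then invokes the process-level convergence in law (Theorem~\ref{convprocess}) directly. As you implicitly noticed, $\ell_q$ is \emph{not bounded}, so strictly speaking Theorem~\ref{convprocess} as stated (for bounded continuous $\varphi$) does not apply without an extra uniform-integrability step. Your argument, working instead at the level of the scalar random variable $Y_p$ and appealing to a higher-moment bound, makes this step explicit and is therefore more rigorous as written. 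One remark: rather than citing Erd\'elyi's external estimate $M_{q'}(F_p)\ll\sqrt p$, you could equally well extract the needed uniform moment bound from the paper's own Proposition~\ref{ProMomentsFiniteDistribution} (take $J=1$, $r_1=s_1=n$ with $2n>q$, and integrate over $t$), keeping the proof self-contained. Also note the minor point that the paper's distributional statement is for $F_p(e(t))/F_p(\zeta_p)$ rather than $F_p(e(t))/\sqrt p$; since these differ only by a unimodular factor, this is harmless for absolute moments.
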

\begin{rem}In the case $q=\infty,$ a beautiful result of Montgomery~\cite{Mont-large} shows that 
\[\sqrt{p}\log \log p\ll M_{\infty}(F_p)=\max_{|z|=1}|F_p(z)|\ll \sqrt{p}\log p,\]
but the exact order of the growth is still unknown.\\
\end{rem}
\subsection{Distributional result.}
Establishing appealing conjectures of Montgomery and Saffari, Rodgers~\cite{Rodgers} recently proved a distributional result for the appropriately normalized family of the Rudin-Shapiro polynomials $P_k(z)$, showing that one has convergence to the uniform distribution on the unit disc $\{|z|\leq 1\}$ in this case. More precisely, one has
\begin{equation}\label{RS-gaussian}
\lim_{k\to \infty}\text{m} \left(t\in [0,1]:\frac{P_k(e(t))}{\sqrt{2^{k+1}}}\in U\right)= \frac{1}{\pi} |U|,
\end{equation}
for any rectangle $U\subset\mathbb{C}$, where $\text{m}$ is the standard Lebesgue measure, and $|U|$ is the area of $U$. Subsequently, Erdélyi \cite{Erdunitcircle} used \eqref{RS-gaussian} to show that a negligible proportion of the zeros of Rudin-Shapiro polynomials lie on the unit circle. This is in a strong contrast with a rather surprising result of Conrey, Granville, Poonen and Soundararajan~\cite{CGPS}, who showed that there exists a constant $1/2<c<1$
such that
$$\{z : |z| = 1\ and\ F_p(z) = 0\} \sim c p,$$
with $0.500668  < c < 0.500813 .$ 
Our next theorem provides an analogue of the Montgomery-Saffari conjecture for Fekete polynomials.
\begin{thm}\label{main_convprocess}
Let $U\subset\mathbb{C}$ be a rectangle with sides parallel to the coordinate axes. Then we have
$$\lim_{p\rightarrow \infty} \text{m} \left(t\in [0,1]:\frac{F_p(e(t))}{F_p(\xi_p)}\in U\right)=\mathbb{P}(G_{\X}(\theta) \in U),
$$
where $\theta$ is a random variable uniformly distributed on $[0,1]$.
 \end{thm}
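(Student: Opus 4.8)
The plan is to reduce the statement to a finite-dimensional equidistribution problem for short strings of Legendre symbols, via an explicit local description of $F_p$ on the unit circle. Writing $\chi = \left(\frac{\cdot}{p}\right)$ and using the Gauss-sum expansion $\chi(n) = \tau(\chi)^{-1}\sum_{a \bmod p}\chi(a) e(na/p)$ (valid since $\chi$ is real and primitive), I would first derive, by summing the geometric series $\sum_{n=1}^{p-1}e(n(t+a/p))$, the exact identity
\begin{equation*}
F_p(e(t)) = \frac{1}{\tau(\chi)}\sum_{a \bmod p}\chi(a)\,\frac{e(t+a/p)-e(pt)}{1-e(t+a/p)}.
\end{equation*}
The summand is large precisely when $t+a/p$ is close to an integer, so I would localize: for $t\in[0,1)$ set $\delta=\{pt\}$ and let $a_0\equiv -\lfloor pt\rfloor \pmod p$, so that for $a=a_0+m$ one has $t+a/p=(m+\delta)/p$ and, using $e(pt)=e(\delta)$ together with $1-e((m+\delta)/p) = -2\pi i(m+\delta)/p + O((m/p)^2)$, the window $|m|\le M$ contributes
\begin{equation*}
F_p(e(t)) = -\frac{p}{\tau(\chi)}\cdot\frac{e(\delta)-1}{2\pi i}\sum_{|m|\le M}\frac{\chi(a_0-m)}{m-\delta} + (\text{error}).
\end{equation*}
Comparing with \eqref{process}, the leading term is exactly $-p/\tau(\chi)$ times a truncation of $G_{\X}(\delta)$ with the random signs $\X(m)$ replaced by the Legendre symbols $\chi(a_0-m)$; since $|p/\tau(\chi)|=\sqrt p$, the deterministic prefactor $-p/\tau(\chi)$ is, by the choice of $\xi_p$, precisely the normalizing value $F_p(\xi_p)$, so that $F_p(e(t))/F_p(\xi_p)$ is, up to controllable errors, the truncated process evaluated at $\delta$ with Legendre-symbol signs.

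The heart of the argument is to show that, as $t$ runs uniformly over $[0,1]$, the pair consisting of the local coordinate $\delta=\{pt\}$ and the sign string $(\chi(a_0-m))_{|m|\le M}$ converges in distribution to $(\theta,(\X(m))_{|m|\le M})$, with $\theta$ uniform on $[0,1]$ \emph{independent} of the i.i.d.\ Rademacher signs. Uniformity and independence of $\theta$ are free: for uniform $t$ the integer part $\lfloor pt\rfloor$ (which fixes $a_0$, hence the string) and the fractional part $\{pt\}=\delta$ are independent, with $\delta$ exactly uniform on $[0,1)$. For the signs I would invoke the Weil bound: for any fixed pattern $\varepsilon\in\{\pm1\}^{2M+1}$ the number of residues $a_0$ with $\chi(a_0-m)=\varepsilon_m$ for all $|m|\le M$ equals $p\,2^{-(2M+1)}(1+o(1))$, the error $O_M(\sqrt p\log p)$ coming from complete character sums of the shape $\sum_{x}\chi\big(\prod_m(x-m)^{1+\varepsilon_m}\big)$. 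This yields the finite-dimensional convergence $V_p^{(M)}(t)\Rightarrow G_{\X}^{(M)}(\theta)$, where $V_p^{(M)}$ and $G_{\X}^{(M)}$ denote the truncated Fekete and limiting quantities.

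It then remains to remove the truncation uniformly in $p$ and to pass from weak convergence in $\mathbb{C}$ to the rectangle statement. For the tail I would work in $L^2([0,1],dt)$: averaging $\sum_{|m|>M}\chi(a_0-m)/(m-\delta)$ over $a_0$, the diagonal gives $\sum_{|m|>M}(m-\delta)^{-2}=O(1/M)$, while the off-diagonal is governed by $\sum_{x\bmod p}\chi((x-m)(x-m'))=-1$ for $m\ne m'$, hence $O((\log p)^2/p)$; together with the geometric-series error from the localization step (handled by partial summation and P\'olya--Vinogradov) this gives $\|F_p(e(\cdot))/F_p(\xi_p)-V_p^{(M)}(\cdot)\|_{L^2}^2 \le \varepsilon(M)+o_p(1)$ with $\varepsilon(M)\to 0$. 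On the limit side $\mathbb{E}|G_{\X}(\theta)-G_{\X}^{(M)}(\theta)|^2=O(1/M)\to0$, since the defining series converges in $L^2$. A converging-together argument upgrades the finite-dimensional convergence to $F_p(e(t))/F_p(\xi_p)\Rightarrow G_{\X}(\theta)$, and the rectangle statement follows by the portmanteau theorem once $\mathbb{P}(G_{\X}(\theta)\in\partial U)=0$; this absolute-continuity input is cheap, as already the $m=0$ term $-\frac{e(\theta)-1}{2\pi i\theta}\X(0)$ has a continuous law for uniform $\theta$, and the remaining independent terms cannot create atoms on the lines bounding $U$.

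The main obstacle, I expect, is making the two approximation steps genuinely uniform: controlling the localization error across the \emph{entire} range of $a$ (not only the window $|m|\le M$) and reconciling the truncation level $M$ with the Weil error $O_M(\sqrt p\log p)$, so that one may first send $p\to\infty$ for fixed $M$ and only afterwards send $M\to\infty$. Carrying the whole truncation argument in $L^2$ over $t$ — rather than pointwise, where P\'olya--Vinogradov is too weak to beat the main term — is what makes this feasible, but the bookkeeping linking the analytic localization error, the character-sum cancellation, and the finite-dimensional limit is the delicate part.
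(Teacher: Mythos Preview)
Your approach is essentially the paper's argument, rederived from scratch rather than quoted. The localization identity you obtain via the Gauss sum is exactly the formula borrowed from \cite{CGPS}, namely
\[
\frac{F_p\!\left(e\!\left(\tfrac{k+t}{p}\right)\right)}{F_p(\zeta_p)}=\sum_{|m|\le (p-1)/2}\left(\frac{k-m}{p}\right)\frac{e(t)-1}{p\big(e((m-t)/p)-1\big)},
\]
your Weil-bound equidistribution of the sign strings $(\chi(a_0-m))_{|m|\le M}$ is the content of Proposition~\ref{ProMomentsFiniteDistribution}, and your $L^2$ tail bound is Lemma~\ref{SecondMomentApprox}. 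The only structural difference is packaging: the paper upgrades these ingredients to convergence in law of the \emph{processes} $(G_p(k,\cdot))$ in $C[0,1]$ via Kolmogorov's tightness criterion (Lemma~\ref{lemlarge}) and then reads off Theorem~\ref{main_convprocess} by applying the continuous functionals $\phi\mapsto\int_0^1\phi^k\bar\phi^{\,j}$, whereas you bypass tightness and argue directly for the scalar random variable $F_p(e(t))/F_p(\xi_p)$ via a truncate--then--converge-together scheme. Both routes are standard and equivalent here; yours is slightly more economical for this particular statement, the paper's yields the stronger process-level result used elsewhere. Two small points: the prefactor you get is $-p/\tau(\chi)=-\chi(-1)F_p(\zeta_p)$, not $F_p(\zeta_p)$ on the nose, but the extra sign is absorbed by the symmetry $\X\mapsto -\X$; and the invocation of P\'olya--Vinogradov is unnecessary, since the localization error on the window $|m|\le M$ is already $O_M(1/p)$ termwise.

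There is one genuine slip in your boundary argument. You claim the $m=0$ term and the ``remaining independent terms'' cannot conspire to put mass on $\partial U$, but the terms $\frac{e(\theta)-1}{2\pi i(m-\theta)}\X(m)$ are \emph{not} independent across $m$: they all share the random variable $\theta$. The paper's fix (Lemma~\ref{approx-distribution}) is to condition on $\X$ and observe that $t\mapsto\Re G_\X(t)$ and $t\mapsto\Im G_\X(t)$ are almost surely non-constant real-analytic functions, so the preimage of any real number has Lebesgue measure zero; averaging over $\X$ then gives $\mathbb{P}(G_\X(\theta)\in\partial U)=0$. With this correction your argument goes through.
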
 We refer our interested reader to the plot above for the visualisation of the random process $G_{\X}.$

\begin{figure}
\centering
\includegraphics[scale=0.5]{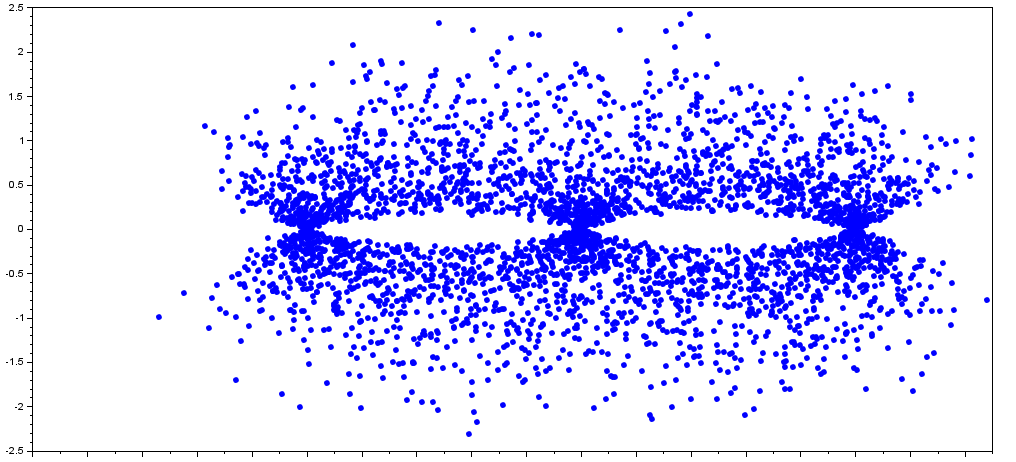}
\caption{Realization of $G_{\X}(t), t\in (0,1)$} 
\label{Random Fekete}
\end{figure}
We now describe yet another relation of our results to the classical problem on the distribution of random trigonometric polynomials.
A seminal result of Salem and Zygmund~\cite{S-Z}, shows that 
if the coefficients $a_k$ are independent identically distributed random variables taking values $\pm 1$ with probability $1/2$ each,
then almost surely
\[\frac{1}{\sqrt{N}}\sum_{k\le N}a_ke(k\theta) \xrightarrow{a.s.} \mathcal{CN}(0,1),\]
where the right hand side denotes a standard complex Gaussian distribution.
In the case of sequences $a_n$ possessing some multiplicative properties, the situation is more subtle and often leads to various arithmetic consequences.  
See for example the work of Hughes
and Rudnick~\cite{Rud-Hugh} on lattice points in annuli. Another example is a recent result of Benatar, Nishry and Rodgers~\cite{B-N-R}, which shows convergence to the Gaussian distribution when $a_n$ are random (Rademacher or Steinhaus) multiplicative functions.\\
Upon applying Fourier inversion, the distribution of a normalized trigonometric polynomial 
\begin{equation}\label{harpchar}
\frac{1}{\sqrt{H}}\sum_{k\le H}\left(\frac{k}{p}\right) e(k\theta),
\end{equation}
for various ranges of $H$ is essentially equivalent to the statistical behaviour of short character sums
$$\displaystyle{\frac{1}{\sqrt{H}}\sum_{x\le n\le x+H}\left(\frac{n}{p}\right)},$$
for a randomly chosen interval $[x,x+H]$ with $x\in \{0, 1, \dots, p-1\}$.
It follows from a recent work of Harper \cite{Harp-moving-character} that for almost all primes $p$ the sum \eqref{harpchar}
converges to the standard Gaussian in the range $H\sim (\log p)^A$ (where $A\geq 1$ is a fixed constant) when $\theta\in [0,1]$ is selected uniformly at random. In contrast, Harper \cite{Harp-moving-character} shows the existence of a sequence of primes $p$ with $p\to\infty,$ where the corresponding sums in \eqref{harpchar} do not satisfy the central limit theorem for this range of $H$.\\ It remains a deep mystery to understand the behaviour of \eqref{harpchar} for different ranges of $1\le H\le p$ and our Theorem \ref{main_convprocess} shows that in the extreme case $H=p,$ the distribution of \eqref{harpchar}  is governed by the random process given by \eqref{process}, and is in particular non-Gaussian.\\


\section{Plan of attack}
We now briefly outline our general strategy.
We consider the auxiliary function
 \[ G_p(k,t):=  \frac{F_p\left(e\left(\frac{k+t}{p}\right)\right)}{F_p(\zeta_p)},\] 
 where $\zeta_p:= e(1/p)$. Note that 
 \begin{equation}\label{eq: GaussSum}
    \begin{aligned}
        F_p(\zeta_p)= \sum_{n = 1}^p \left(\frac{n}{p}\right) e(n/p) = \begin{cases}
             \sqrt{p}, & \text{ if } p \equiv 1 \pmod 4,  \\
             i \sqrt{p}, & \text{ if } p \equiv 3 \pmod 4.
        \end{cases}
    \end{aligned}
    \end{equation} is the Gauss sum associated to the Legendre symbol modulo $p$.
 
 The key starting point in our investigations is the identity borrowed from \cite[Formulas $(2.2)$ and $(2.3)$]{CGPS} which implies that
\[
  G_p(k, t)= \sum_{|m|\leq (p-1)/2} \bigg(\frac{k-m}p\bigg)  \frac{e(t)-1} {p(e( \frac{m-t}p)-1) }.
\]
Given $|m|\le (p-1)/2,$ we now aim to exploit the ``randomness" coming from the action of the shifts $\bigg(\frac{k-m}p\bigg)$ for various $0\le k\le p-1.$ 
To this end, we view $(G_p(k, t))_{t\in [0,1]}$ as a random process on the finite probability space $\mathbb{F}_p$ equipped with the uniform measure. 
Our goal is to prove the following result.
\begin{thm}\label{convprocess}
For any bounded continuous function $\varphi : C[0,1]\to \mathbb{C}$ we have 
\begin{equation}\label{ConvLaw}
\lim_{p\to \infty} \frac 1p \sum_{k=0}^{p-1} \varphi\left(\frac{F_p\left(e\left(\frac{k+t}{p}\right)\right)}{F_p(\zeta_p)}\right)= \ex\varphi (G_{\X}),\end{equation}
where $G_{\X}$ is defined in \eqref{process}.
 \end{thm}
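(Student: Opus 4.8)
The plan is to prove \eqref{ConvLaw} as a statement of weak convergence of the $C[0,1]$-valued random elements $t\mapsto G_p(k,t)$, defined over the uniform probability space $\mathbb{F}_p$, towards $G_{\X}$. By the standard criterion for weak convergence in $C[0,1]$ (Prokhorov's theorem together with the fact that the finite-dimensional distributions determine a Borel measure), it suffices to establish two things: (i) convergence of all finite-dimensional distributions, and (ii) tightness of the family $\{G_p(k,\cdot)\}_p$. Throughout I would use the identity
$$G_p(k,t)=\sum_{|m|\le (p-1)/2} c_m^{(p)}(t)\left(\frac{k-m}{p}\right),\qquad c_m^{(p)}(t):=\frac{e(t)-1}{p\left(e\left(\frac{m-t}{p}\right)-1\right)},$$
together with the elementary fact that $c_m^{(p)}(t)\to \frac{e(t)-1}{2\pi i(m-t)}$ for each fixed $m$, so that the coefficients of $G_p$ converge pointwise to those of $G_{\X}$ in \eqref{process}.

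For the finite-dimensional distributions I would fix times $t_1,\dots,t_r\in[0,1]$ and apply the method of moments to the random vector $\big(G_p(k,t_1),\dots,G_p(k,t_r)\big)$. The only randomness comes from the Legendre symbols $(\frac{k-m}{p})$, which are real, so every mixed moment in $G_p$ and $\overline{G_p}$ reduces to an average of a product of such symbols. For distinct residues $m_1,\dots,m_s$ one has
$$\frac1p\sum_{k=0}^{p-1}\prod_{j=1}^{s}\left(\frac{k-m_j}{p}\right)^{\varepsilon_j}=\begin{cases}1+O(s/p),&\text{all }\varepsilon_j\text{ even},\\ O(s/\sqrt p),&\text{some }\varepsilon_j\text{ odd},\end{cases}$$
the second case following from Weil's bound for the complete character sum attached to a polynomial with nontrivial squarefree part. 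These are exactly the mixed moments of i.i.d. Rademacher variables, so the finite vector $\big((\frac{k-m}{p})\big)_{|m|\le M}$ converges in law to $(\X(m))_{|m|\le M}$. Truncating the sum defining $G_p(k,t_i)$ at $|m|\le M$, taking $p\to\infty$ via this moment convergence and the pointwise convergence of coefficients, and finally letting $M\to\infty$ while bounding the discarded tail by its second moment $\sum_{|m|>M}|c_m^{(p)}(t_i)|^2$ (uniformly small in $p$, since $|c_m^{(p)}(t)|\ll 1/(1+|m|)$ in the bulk and $\ll 1/p$ near $|m|=p/2$) yields the claimed convergence. The method of moments is legitimate because $G_{\X}(t)$, a sum of bounded independent variables with $\ell^2$ coefficients, has sub-Gaussian tails and is therefore moment-determinate.

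For tightness I would establish the Kolmogorov--Chentsov increment bound
$$\frac1p\sum_{k=0}^{p-1}\left|G_p(k,t)-G_p(k,s)\right|^{4}\ll |t-s|^{4},$$
uniformly in $p$ and in $s,t\in[0,1]$. Writing $d_m:=c_m^{(p)}(t)-c_m^{(p)}(s)$ and expanding the fourth power produces a fourfold character sum: the terms in which the four indices pair up contribute $O\big((\sum_m|d_m|^2)^2\big)$, while the remaining terms are $O\big((\sum_m|d_m|)^4/\sqrt p\big)$ by Weil. The crucial analytic input is the uniform coefficient estimate $|d_m|\ll |t-s|/(1+|m|)$ in the bulk (with the faster decay $|d_m|\ll|t-s|/p$ near the edge $|m|\approx p/2$ and a separate bounded-derivative treatment of the removable singularity at $m\approx 0$), which gives $\sum_m|d_m|^2\ll|t-s|^2$ and $\sum_m|d_m|\ll|t-s|\log p$. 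Hence the paired terms are $\ll|t-s|^4$ and the off-diagonal terms are $\ll|t-s|^4(\log p)^4/\sqrt p\ll|t-s|^4$ for $p$ large; the exponent $4>1$ then yields tightness. Combined with (i), this gives weak convergence to the unique law with the finite-dimensional distributions of $G_{\X}$, which is \eqref{ConvLaw}.

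The step I expect to be the main obstacle is the tightness bound, and specifically the requirement that the coefficient increment estimate $|d_m|\ll|t-s|/(1+|m|)$ hold uniformly in $p$ across the entire range $|m|\le(p-1)/2$. The factor $\big(p(e(\frac{m-t}{p})-1)\big)^{-1}$ mimics $\frac{1}{2\pi i(m-t)}$ only in the bulk and degenerates both near the removable pole $m\approx 0$ (where $t,s$ may also be small) and near the edge $|m|\approx p/2$; controlling these regimes while simultaneously keeping the fourfold Weil error uniform in $p$ is the delicate part. By contrast, the finite-dimensional convergence is essentially a mechanical consequence of Weil's bound once the truncation is set up.
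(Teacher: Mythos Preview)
Your strategy is correct and follows the same two-step scheme as the paper: finite-dimensional convergence via Weil's bound and the method of moments, followed by tightness via Kolmogorov's moment criterion. The implementations differ mainly in the tightness step. The paper proves a \emph{second}-moment bound $\mathbb{E}|G_p(k,t)-G_p(k,s)|^2\ll |t-s|^{3/2}$; to obtain it, the authors first replace $c_m^{(p)}(t)$ by its limit $g_m(t)=\frac{e(t)-1}{2\pi i(m-t)}$ at the cost of an additive $O(1/p)$ error in each coefficient, which then forces a case split $|t-s|\lessgtr(\log p)^{-4}$ and, for small increments, an appeal to Montgomery's bound $\|F_p\|_\infty\ll\sqrt p\log p$ together with Bernstein's inequality. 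Your fourth-moment route, based on bounding $d_m=c_m^{(p)}(t)-c_m^{(p)}(s)$ \emph{directly} by the mean value theorem applied to $c_m^{(p)}$ (so that $|d_m|\ll|t-s|/(1+|m|)$ with no $1/p$ correction), avoids both the case split and the Montgomery--Bernstein input, and is in that sense more self-contained. In fact, once one has this uniform increment bound, already the second moment gives $\mathbb{E}|G_p(k,t)-G_p(k,s)|^2\le\sum_m|d_m|^2\ll|t-s|^2$, since by Lemma~\ref{quadsumLegendre} the off-diagonal contribution equals $-\frac1p\big|\sum_m d_m\big|^2\le 0$; so the fourth moment is not strictly needed.

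Your stated worry about uniformity near the edge and the removable pole is unfounded. Since $|p(e(\frac{m-t}{p})-1)|=2p|\sin(\pi(m-t)/p)|\ge 4|m-t|$ throughout $|m-t|\le p/2$, differentiating $c_m^{(p)}$ directly gives $|\partial_t c_m^{(p)}(t)|\ll 1/(1+|m|)$ uniformly in $p$ and $t\in[0,1]$; for $m\in\{0,1\}$ one writes $c_m^{(p)}(t)=g_m(t)\big/h\big(\tfrac{m-t}{p}\big)$ with $h(x)=\frac{e(x)-1}{2\pi i x}$ entire and $h(0)=1$, which exhibits $c_m^{(p)}$ as a smooth, $p$-uniform perturbation of the bounded function $g_m$. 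Near the edge $|m|\approx p/2$ the denominator is $\asymp p$, so nothing degenerates there either.
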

In probabilistic terms, Theorem \ref{convprocess} is equivalent to the statement that the sequence of random processes $(G_p(k, t))_{t\in [0,1]}$ converges in law (in the space of continuous functions on $[0,1]$) to the process  $(G_{\X}(t))_{t\in [0, 1]}$. 
There are two main ingredients to prove this convergence in law. The first is proving the convergence of finite distributions, that is the convergence of finite moments, which will be established in Section \ref{finitedib}. The second ingredient is to show that the  sequence of random processes $(G_p(k, t))_{t\in [0,1]}$ is relatively compact, which is equivalent, by Prokhorov's theorem, to verifying the ``tightness'' of these processes (this is the content of Section \ref{tightness}).\\

Now we can write the logarithmic Mahler measure of $F_p$ as follows
\[
\log M_0(F_p)=\int_0^1\log  | F_p(e(t))| dt = \frac 1p \sum_{k=0}^{p-1}    \int_0^1 \log \bigg| F_p\bigg( e\bigg( \frac{k+t}p\bigg) \bigg) \bigg|  dt
\]
\[
= \log \sqrt{p} + \frac 1p \sum_{k=0}^{p-1}    \int_0^1 \log  |G_p(k,t)|   dt, 
\]
since $|F_p(\zeta_p)|=\sqrt{p}$.
We note that the functional $\ell(\phi)=\int_{0}^1\log |\phi(t)|dt$ is not continuous on $C[0,1]$ and so we cannot apply Theorem \ref{convprocess} directly.
We thus have to deal with the issues of uniform ``log-integrability," which is a classical topic in the theory of random and deterministic Fourier series and our treatment is inspired by the work of Nazarov, Nishry and Sodin \cite{NNS}.
Roughly speaking, to overcome this obstacle, we instead consider the regularized functional $\ell_{\epsilon}(\phi)=\int_{\varepsilon}^{1-\varepsilon}\log (|\phi(t)|)\mathbbm{1}_{|\phi(t)|\ge \varepsilon}dt,$ to which our Theorem \ref{convprocess} applies.
We then show that uniformly for each function in our family, {\it either} $|\frac{G_{p,k}'(t)}{e(t)-1}|\gg 1,$ or $|\frac{G_{p,k}''(t)}{e(t)-1}|\gg 1,$ which then allows us, after some technical work, to adequately control the logarithmic singularities on both random and  deterministic sides.


\section{Preparatory results}
We begin with several simple observations concerning the random process.

\begin{lem}\label{lem:smooth-process}
The functions  $t \rightarrow \Re G_{\X}(t)$ and $t \rightarrow \Im G_{\X}(t)$ are almost surely real-analytic on $[0,1]$. 
\end{lem}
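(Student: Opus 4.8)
The plan is to prove the stronger statement that, almost surely, the series defining $G_{\X}$ extends to an \emph{entire} function of a complex variable; the lemma then follows immediately, since the restriction of an entire function to the real line has real-analytic real and imaginary parts (expand the local power series at a real point $t_0$ and separate real and imaginary coefficients, using that $(t-t_0)^n\in\mathbb{R}$). Accordingly I would replace the real parameter $t$ by a complex variable $z$ and study
\[
G_{\X}(z) = \frac{e(z)-1}{2\pi i}\sum_{m\in\mathbb{Z}} \frac{\X(m)}{m-z}.
\]
The prefactor $(e(z)-1)/(2\pi i)$ is entire and vanishes to first order precisely at the integers, a fact I will use to cancel the poles of the sum; moreover each individual term $\frac{e(z)-1}{2\pi i(m-z)}$ is itself entire, with a removable singularity at $z=m$.

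The only genuine issue is convergence, since $\sum_m \frac{\X(m)}{m-z}$ converges merely conditionally (the terms decay like $1/|m|$). To isolate the difficulty I would subtract the behaviour at infinity: for $m\neq 0$ one has $\frac{1}{m-z}-\frac1m = \frac{z}{m(m-z)}$, so that, writing $C:=\sum_{m\neq0}\frac{\X(m)}{m}$,
\[
\sum_{m\in\mathbb{Z}}\frac{\X(m)}{m-z} = -\frac{\X(0)}{z} + C + z\sum_{m\neq0}\frac{\X(m)}{m(m-z)}.
\]
Here the last series converges absolutely and locally uniformly on $\mathbb{C}$ away from the nonzero integers (its terms are $O(1/m^2)$ uniformly on compacta), hence defines a meromorphic function with at most simple poles at the nonzero integers. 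The constant $C$ is a sum of independent mean-zero random variables with $\sum_{m\neq0}\ex|\X(m)/m|^2 = 2\sum_{m\geq1}1/m^2<\infty$, so by Kolmogorov's theorem on random series (or the three-series theorem) it converges almost surely. On the probability-one event where $C$ converges, the displayed identity exhibits $\sum_m \frac{\X(m)}{m-z}$ as a meromorphic function on $\mathbb{C}$ whose only singularities are simple poles at the integers.

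Finally, multiplying by $(e(z)-1)/(2\pi i)$ removes every pole: near $z=n\in\mathbb{Z}$ the simple pole of the sum (residue $-\X(n)$) is killed by the simple zero of $e(z)-1$, and a direct limit computation gives the finite value $-\X(n)$ there. Thus $G_{\X}$ is almost surely entire, and restricting to $[0,1]$ yields the claimed real-analyticity of $\Re G_{\X}$ and $\Im G_{\X}$. The main obstacle is exactly this conditional convergence: the subtraction of $1/m$ is what converts the problem into an absolutely (locally uniformly) convergent series plus a single scalar random series, and the crux is to verify that this scalar series $\sum_{m\neq0}\X(m)/m$ converges almost surely — this is where the randomness of the signs $\X(m)$ is essential, since $\sum_m 1/m$ itself diverges.
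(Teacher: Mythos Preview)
Your argument is correct and actually proves a stronger statement (almost sure extension of $G_{\X}$ to an entire function) than the paper needs. The paper's proof is more localized: it splits off the terms $m=-1,0,1$ (each of which is individually analytic on $[0,1]$), factors out $\sin(2\pi t)$ from the remaining real part, and then shows directly that $\sum_{|m|\ge 2}\X(m)/(m-t)$ is real-analytic on $[0,1]$ by bounding all successive derivatives uniformly, using that for $|m|\ge 2$ and $t\in[0,1]$ one has $|m-t|\ge 1$ so the differentiated series $\sum_{|m|\ge 2}\X(m)/(m-t)^{k+1}$ converge absolutely with bounds of the form $C k!$. Your Mittag--Leffler style subtraction $\tfrac{1}{m-z}-\tfrac1m$ is a genuinely different device: it isolates the conditional-convergence issue into the single scalar random series $C=\sum_{m\ne 0}\X(m)/m$, handled by Kolmogorov's three-series theorem, and leaves an absolutely convergent meromorphic piece whose poles are then cancelled by the zeros of $e(z)-1$. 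The payoff of your approach is a clean global statement (entireness, hence real-analyticity everywhere on $\mathbb{R}$) and a transparent identification of where the randomness is actually used; the paper's approach is more elementary in the sense that it stays on the real interval, avoids any discussion of poles or removable singularities, and needs only term-by-term differentiation plus crude uniform bounds.
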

\begin{proof}
    We only prove the result for $\Re
    G_{\X}(t)$ since the proof for $\Im G_{\X}(t)$ is similar. We write 
    $$ \Re G_{\X}(t)= \Re \sum_{m\in \mathbb{Z}} \frac{e(t)-1}{2\pi i (m-t)} \X(m)=\sum_{|m|\ge 2}\frac{\sin(2\pi t)}{2\pi(m-t)} \X(m)+\sum_{m=-1,0,1}\frac{\sin(2\pi t)}{2\pi (m-t)} \X(m),$$
    and observe that the second sum defines an analytic function over all realizations of $\mathbb{X}(-1),\mathbb{X}(0)$ and $\mathbb{X}(1).$ Since $\sin(2\pi t)$ is analytic it is enough to show that the random function
    $$ G_{\X,2}(t)= \sum_{|m|\ge 2} \frac{\X(m)}{ m-t}$$
    is almost surely analytic. We notice that 
    $$ G'_{\X,2}(t)= \sum_{|m|\ge 2} \frac{\X(m)}{ (m-t)^2}$$
    is absolutely convergent over all realizations of $\X.$ Similarly, we almost surely have the bound $|G_{\X,2}^{(k)}(t)|\ll \sum_{m\ge 1}\frac{1}{m^k}\ll 1,$ which holds uniformly for all $t\in [0,1].$ This implies the (real) analyticity of $G_{\X}(t).$
\end{proof}
\begin{lem}\label{approx-distribution}
Let $U \subset \mathbb{C}$ be a rectangle with sides parallel to the coordinate axes.  Then, almost surely,
$\textup{m} \left(t\in [0,1]: G_{\X}(t)\in \partial U\right)=0.$
\end{lem}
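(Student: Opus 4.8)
The plan is to reduce the two-dimensional condition $G_{\X}(t)\in\partial U$ to conditions on the level sets of $\re G_{\X}$ and $\im G_{\X}$, and then to invoke the real-analyticity supplied by Lemma \ref{lem:smooth-process}.

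First I would note that for an axis-parallel rectangle $U=[a_1,a_2]\times[b_1,b_2]$ the boundary lies on the two vertical lines $\re z\in\{a_1,a_2\}$ together with the two horizontal lines $\im z\in\{b_1,b_2\}$. Hence
\[
\{t\in[0,1]: G_{\X}(t)\in\partial U\}\subseteq\bigcup_{j=1}^{2}\{t: \re G_{\X}(t)=a_j\}\cup\bigcup_{j=1}^{2}\{t: \im G_{\X}(t)=b_j\},
\]
and it suffices to prove that, almost surely, each of these four level sets is finite; the lemma then follows since a finite union of finite sets has Lebesgue measure zero.

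By Lemma \ref{lem:smooth-process} there is a probability-one event on which both $t\mapsto\re G_{\X}(t)$ and $t\mapsto\im G_{\X}(t)$ are real-analytic on the connected compact interval $[0,1]$. On such an interval the identity theorem guarantees that a real-analytic function $f$ which is not identically equal to a constant $c$ can attain the value $c$ only on a set without accumulation points, hence on a finite set (otherwise the zeros of $f-c$ would accumulate and force $f\equiv c$). Thus each level set above is finite provided the corresponding coordinate function is non-constant, and the proof reduces to this non-constancy.

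It remains to check that $\re G_{\X}$ and $\im G_{\X}$ are non-constant, which I would do on the same full-measure event using the factorizations $\re G_{\X}(t)=\frac{\sin(2\pi t)}{2\pi}\,g(t)$ and $\im G_{\X}(t)=\frac{1-\cos(2\pi t)}{2\pi}\,g(t)$, where $g(t):=\sum_{m}\frac{\X(m)}{m-t}$. For the real part, $\re G_{\X}(1/2)=\frac{\sin\pi}{2\pi}g(1/2)=0$, whereas the simple pole of $g$ at the origin cancels the zero of $\sin(2\pi t)$ to give $\re G_{\X}(0)=-\X(0)=\pm1$, so $\re G_{\X}$ is non-constant. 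For the imaginary part, if it were constant it would equal $\im G_{\X}(0)=0$, forcing $g\equiv0$ on $(0,1)$ because $1-\cos(2\pi t)>0$ there, which contradicts the fact that $g(t)\sim-\X(0)/t$ is unbounded as $t\to0^+$. The only genuine point of care is this non-constancy input; the remainder is the soft passage from $\partial U$ to finite level sets of analytic functions.
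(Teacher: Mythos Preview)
Your argument is correct and follows the same route as the paper: reduce $\{G_{\X}(t)\in\partial U\}$ to finitely many level sets of $\re G_{\X}$ and $\im G_{\X}$, then use the almost-sure real-analyticity from Lemma~\ref{lem:smooth-process} together with non-constancy to conclude that each level set has measure zero. The paper's proof simply asserts that $\re G_{\X}$ and $\im G_{\X}$ are ``almost surely non-constant real analytic functions'' and stops there; you actually supply the missing verification of non-constancy via the values $\re G_{\X}(0)=-\X(0)\neq 0=\re G_{\X}(1/2)$ and the pole of $g(t)=\sum_m \X(m)/(m-t)$ at $t=0$, so in this respect your write-up is more complete than the paper's.
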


\begin{proof}
We note that it is enough to show that for any $a\in\mathbb{R}$ we have almost surely $$\text{m} (t\in [0,1]: \Re G_{\X}(t)=a)=0 \textup{ and } \text{m} (t\in [0,1]: \Im G_{\X}(t)=a)=0.$$
By the previous lemma, the functions $\Re G_{\X}(t)$ and $\Im G_{\X}(t)$ are almost surely non-constant real analytic functions and so the claim follows immediately.

\end{proof}

\section{Convergence to the random process: proof of Theorem \ref{convprocess} }\label{ProcessFekete}
In order to prove Theorem \ref{convprocess}, we introduce truncations 
$$G_{\X, p}(t)= \sum_{|m|\leq (p-1)/2} \frac{e(t)-1}{2\pi i (m-t)} \X(m)$$
and further define  
$$ \widetilde{G}_{\X,p}(t)= \sum_{|m|\leq (p-1)/2} \frac{e(t)-1} {p(e( \frac{m-t}p)-1)} \X(m).$$ To simplify the notation we let  
\begin{equation}\label{def-alpha} \alpha_p(m; t):= \frac{e(t)-1} {p(e( \frac{m-t}p)-1)} \end{equation} and note that 
 for all $|m|\leq (p-1)/2$ and $t\in [0, 1]$ we have uniformly
\begin{equation}\label{ApproxCoefProcess}
\alpha_p(m; t)= \frac{e(t)-1}{2\pi i (m-t)}+O\left(\frac{1}{p}\right).
\end{equation} 
\subsection{Convergence in the sense of finite distributions }\label{finitedib}
In this section, we will prove the following proposition.
\begin{pro}\label{ProMomentsFiniteDistribution}
Let $J\geq 1$ be an integer, $0 \leq t_1 < t_2 < \cdots < t_J \leq 1$ be real numbers and $r_1,\dots, r_J$ and $s_1,\dots,s_J$ be non-negative integers. Then we have 
$$ \frac{1}{p} \sum_{k \in \mathbb{F}_p} \prod_{j=1}^J G_p(k, t_j)^{r_j} \overline{G_p(k, t_j)}^{s_j} = 
\ex\left(\prod_{j=1}^J 
G_{\X}(t_j)^{r_j} \overline{G_{\X}(t_j)}^{s_j} \right) + O_{s, r}\left(\frac{(\log p)^{r+s}}{\sqrt{p}}\right),
$$
where $s= s_1+\cdots+s_J$ and $r=r_1+\cdots + r_J$.
\end{pro}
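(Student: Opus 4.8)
The plan is to expand the product over all $J$ frequencies into a sum over tuples of indices, swap the average over $k$ with this expansion, and recognise the resulting averages of Legendre symbols as complete character sums governed by Weil's bound. Writing $\chi := \left(\frac{\cdot}{p}\right)$ for the quadratic character modulo $p$ and recalling that $G_p(k,t)=\sum_{|m|\le (p-1)/2}\chi(k-m)\,\alpha_p(m;t)$ with $\alpha_p$ as in \eqref{def-alpha}, I would expand
\[
\prod_{j=1}^J G_p(k,t_j)^{r_j}\overline{G_p(k,t_j)}^{s_j}
=\sum_{\mathbf m}\Big(\prod_{i=1}^{r+s}\beta_i(m_i)\Big)\,\chi\Big(\prod_{i=1}^{r+s}(k-m_i)\Big),
\]
where $\mathbf m=(m_1,\dots,m_{r+s})$ runs over $\{|m|\le (p-1)/2\}^{\,r+s}$ and each coefficient $\beta_i$ equals $\alpha_p(\cdot;t_j)$ or $\overline{\alpha_p(\cdot;t_j)}$ according to its slot (here I use that $\chi$ is real). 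Averaging over $k$ then reduces the whole moment to the weighted character sums $W_p(\mathbf m):=\frac1p\sum_{k\in\mathbb F_p}\chi\big(\prod_i(k-m_i)\big)$, which depend only on $\mathbf m$ and not on the $t_j$.

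The key structural input is a dichotomy on $\mathbf m$. Since $\{|m|\le (p-1)/2\}$ is a complete residue system modulo $p$, the factors $X-m_i$ remain distinct over $\mathbb F_p$, so the multiplicities of $\prod_i(X-m_i)$ are exactly those of the multiset $\{m_i\}$. If every value occurs with \emph{even} multiplicity then this polynomial is a perfect square, whence $\chi$ of it is $1$ off its roots and $W_p(\mathbf m)=1+O\big((r+s)/p\big)$; otherwise its squarefree kernel is nonconstant and Weil's bound for character sums yields $|W_p(\mathbf m)|\ll (r+s)/\sqrt p$. I would then split the $\mathbf m$-sum accordingly. The \emph{odd} tuples are controlled by the triangle inequality together with the uniform estimate $\sum_{|m|\le (p-1)/2}|\alpha_p(m;t)|\ll \log p$ (which follows from $|\alpha_p(m;t)|\ll 1/\max(1,|m|)$, itself a consequence of \eqref{ApproxCoefProcess}), giving a total contribution
\[
\ll \frac{r+s}{\sqrt p}\prod_{i=1}^{r+s}\sum_{|m|\le (p-1)/2}|\alpha_p(m;t)|\ \ll_{r,s}\ \frac{(\log p)^{r+s}}{\sqrt p},
\]
which is precisely the claimed error term.

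It remains to match the \emph{even} tuples with the limiting moment. On even tuples $W_p(\mathbf m)=1+O((r+s)/p)$, and since paired indices restore absolute convergence one has $\sum_{\text{even}}\prod_i|\beta_i(m_i)|\ll_{r,s}1$ (each value-block of size $2b$ contributes $\ll\sum_m \max(1,|m|)^{-2b}\ll1$); hence the even part equals $\sum_{\text{even}}\prod_i\beta_i(m_i)+O((r+s)/p)$. On the probabilistic side, expanding $G_{\X}(t)=\sum_m \frac{e(t)-1}{2\pi i(m-t)}\X(m)$ and using $\ex\prod_i\X(m_i)=\mathbbm{1}[\text{all multiplicities even}]$ shows that $\ex\big(\prod_j G_{\X}(t_j)^{r_j}\overline{G_{\X}(t_j)}^{s_j}\big)$ equals the same even-tuple sum but with $\alpha_p$ replaced by its main term $c(m;t):=\frac{e(t)-1}{2\pi i(m-t)}$ and the indices ranging over all of $\mathbb Z$ (the interchange of $\ex$ and the series being legitimate by the absolute convergence just noted). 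I would close the gap in two steps: the truncation tail $|m|>(p-1)/2$ costs only $\ll_{r,s}1/p$ because each such value is paired and $\sum_{|m|>(p-1)/2}|m|^{-2}\ll 1/p$; and the replacement $\alpha_p\to c$ is done by telescoping, where — the crucial point — although $\alpha_p(m;t)-c(m;t)=O(1/p)$ only, the index being replaced is repeated in any even tuple and so carries a companion factor $\ll 1/\max(1,|m|)$, yielding $\sum_{|m|\le(p-1)/2}\frac{|\alpha_p(m;t)-c(m;t)|}{\max(1,|m|)}\ll \frac{\log p}{p}$. Combining, the even part differs from the target moment by $\ll_{r,s}(\log p)/p$, which is absorbed into the stated error.

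The main obstacle is precisely the lack of absolute convergence: the coefficients satisfy $|c(m;t)|\asymp 1/|m|$, so they lie in $\ell^2$ but not $\ell^1$, the crude sums $\sum_{|m|\le(p-1)/2}|\alpha_p(m;t)|$ grow like $\log p$, and the coefficient error $\alpha_p-c$ is only $O(1/p)$ and not by itself summable to something small. The estimate works only because the diagonal (even-multiplicity) tuples force index repetitions that restore absolute convergence and supply the extra decay needed, while Weil's square-root cancellation disposes of the off-diagonal tuples; reconciling these two mechanisms so as to land on the sharp error $(\log p)^{r+s}/\sqrt p$ is the delicate part of the argument.
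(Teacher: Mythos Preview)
Your argument is correct and follows the paper's strategy almost verbatim for the main step: expand the product, swap the $k$-average inside, and invoke Weil's bound to replace $\frac1p\sum_k\chi\big(\prod_i(k-m_i)\big)$ by the indicator of the event ``all multiplicities even'' up to $O((r+s)/\sqrt p)$, with the error absorbed by $\big(\sum_{|m|\le(p-1)/2}|\alpha_p(m;t)|\big)^{r+s}\ll(\log p)^{r+s}$.

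The only place you diverge from the paper is in passing from the ``even--tuple'' main term to the limiting moment. The paper packages that main term as $\ex\big(\prod_j\widetilde G_{\X,p}(t_j)^{r_j}\overline{\widetilde G_{\X,p}(t_j)}^{s_j}\big)$, then writes $G_\X=\widetilde G_{\X,p}+\Y$, uses the deterministic bound $|\widetilde G_{\X,p}|\ll\log p$ and the second--moment estimate $\ex|\Y|^2\ll 1/p$ (Lemma~\ref{SecondMomentApprox}), and finishes with Cauchy--Schwarz. You instead stay on the combinatorial side, exploiting that on even tuples every index has a companion, which (via a matching) makes the coefficient sums absolutely convergent and lets you telescope $\alpha_p\to c$ and truncate $|m|>(p-1)/2$ with total cost $O_{r,s}((\log p)/p)$. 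Both routes are valid; yours is a touch more elementary and gives a slightly smaller error in the comparison step, while the paper's probabilistic packaging hides the even--tuple combinatorics inside an expectation and keeps the bookkeeping shorter.
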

To prove this result we first derive a natural approximation property.
\begin{lem}\label{SecondMomentApprox}
Let $p$ be a large prime. Then uniformly for $t\in [0,1]$ we have
$$\ex\left(\left| G_{\X}(t)-\widetilde{G}_{\X,p}(t)\right|^2\right)\ll \frac{1}{p}, $$
where the implicit constant is absolute.
\end{lem}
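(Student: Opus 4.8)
The plan is to exploit the independence of the coefficients $\{\X(m)\}_{m\in\mathbb{Z}}$ in order to reduce the second moment to a purely deterministic sum of squared coefficients, and then to estimate that sum by splitting it according to the truncation at $|m|=(p-1)/2$. Writing out the difference,
\[
G_{\X}(t)-\widetilde{G}_{\X,p}(t)=\sum_{|m|\le (p-1)/2}\Big(\tfrac{e(t)-1}{2\pi i (m-t)}-\alpha_p(m;t)\Big)\X(m)+\sum_{|m|>(p-1)/2}\tfrac{e(t)-1}{2\pi i (m-t)}\X(m),
\]
and using that the $\X(m)$ are real, independent, mean zero and of unit variance, all cross terms vanish in expectation, so
\[
\ex\big(|G_{\X}(t)-\widetilde{G}_{\X,p}(t)|^2\big)=\sum_{|m|\le (p-1)/2}\Big|\tfrac{e(t)-1}{2\pi i (m-t)}-\alpha_p(m;t)\Big|^2+\sum_{|m|>(p-1)/2}\Big|\tfrac{e(t)-1}{2\pi i (m-t)}\Big|^2.
\]
It then remains to bound the two sums on the right, uniformly in $t\in[0,1]$.

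For the tail sum I would use the trivial bound $|e(t)-1|=2|\sin(\pi t)|\le 2$, so that each summand is at most $\pi^{-2}|m-t|^{-2}$. For $|m|>(p-1)/2$ and $t\in[0,1]$ we have $|m-t|\ge (p-1)/2$, whence
\[
\sum_{|m|>(p-1)/2}\Big|\tfrac{e(t)-1}{2\pi i (m-t)}\Big|^2\ll \sum_{m\ge (p-1)/2}\frac{1}{m^2}\ll \frac{1}{p},
\]
which is of the desired order and manifestly uniform in $t$. For the first sum, the point is precisely the uniform coefficient estimate \eqref{ApproxCoefProcess}, which gives $\frac{e(t)-1}{2\pi i (m-t)}-\alpha_p(m;t)=O(1/p)$ uniformly for $|m|\le (p-1)/2$ and $t\in[0,1]$. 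Hence each of the $O(p)$ summands is $O(1/p^2)$, and the whole sum is $O(1/p)$. Adding the two contributions yields the claim.

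The only delicate point---and the step I would check most carefully---is the uniformity of the $O(1/p)$ in \eqref{ApproxCoefProcess}, which is what drives the whole estimate. After factoring out $e(t)-1$, the coefficient difference equals $\frac1p\,(e(t)-1)\,g(w)$ with $w=(m-t)/p$ and $g(w):=\frac{1}{2\pi i w}-\frac{1}{e(w)-1}$. The function $g$ extends analytically across $w=0$, since the poles cancel there (with $g(0)=\tfrac12$), and it stays bounded on the disc $|w|\le 2/3$; the latter disc contains all relevant values because $|m-t|/p\le \tfrac12+o(1)$ in the range $|m|\le (p-1)/2$, keeping $w$ safely away from the poles of $\frac{1}{e(w)-1}$ at the nonzero integers. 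This analyticity-and-boundedness input is the real content; everything else is bookkeeping.
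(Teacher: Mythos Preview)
Your argument is correct and essentially coincides with the paper's proof: both estimate the tail $\sum_{|m|>(p-1)/2}|e(t)-1|^2/(2\pi(m-t))^2\ll 1/p$ and use \eqref{ApproxCoefProcess} to bound the inner sum by $O(p)\cdot O(1/p^2)=O(1/p)$. The only cosmetic difference is that the paper first applies Minkowski's inequality to split $G_{\X}-\widetilde{G}_{\X,p}$ via the intermediate truncation $G_{\X,p}$ and then computes each second moment, whereas you use orthogonality of the $\X(m)$ from the outset to get the exact identity for the second moment---your route is slightly more direct but equivalent.
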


\begin{proof} By Minkowski's inequality we have 
\begin{equation}\label{MinkoMoment}
\ex\left(\left| G_{\X}(t)-\widetilde{G}_{\X,p}(t)\right|^2\right)\ll \ex\left(\left| G_{\X}(t)-G_{\X,p}(t)\right|^2\right)  + \ex\left(\left| G_{\X,p}(t) - \widetilde{G}_{\X,p}(t)\right|^2\right).
\end{equation}
We shall bound each moment separately. Since the $\X(m)$ are independent with mean $0$ and variance $1$, we have uniformly for $t\in [0, 1]$
\begin{equation}\label{SecondMomentApprox1}
\ex\left(\left| G_{\X}(t)-G_{\X,p}(t)\right|^2\right) = 
\sum_{|m|>(p-1)/2} \frac{|e(t)-1|^2}{4\pi^2 (m-t)^2} \ll \sum_{|m|>(p-1)/2} \frac{1}{m^2} \ll\frac{1}{p}.
\end{equation}
Furthermore, by \eqref{ApproxCoefProcess} we derive 
\begin{equation}\label{SecondMomentApprox2}
\ex\left(\left| G_{\X,p}(t) - \widetilde{G}_{\X,p}(t)\right|^2\right)= 
\sum_{|m|\leq \frac{p-1}{2}} |e(t)-1|^2\left| \alpha_p(m; t)- \frac{e(t)-1}{2\pi i (m-t)}\right|^2 \ll \frac{1}{p},
\end{equation}
uniformly for $t\in [0,1]$. Inserting the estimates \eqref{SecondMomentApprox1} and \eqref{SecondMomentApprox2} in \eqref{MinkoMoment} completes the proof.
\end{proof}

\begin{proof}[Proof of Proposition \ref{ProMomentsFiniteDistribution}] 
We write  
\begin{align*}
 &\prod_{j=1}^J G_p(k, t_j)^{r_j} \overline{G_p(k, t_j)}^{s_j}\\
  = & \prod_{j=1}^J \left(\sum_{|m|\leq (p-1)/2} \bigg(\frac{k-m}p\bigg) \alpha_p(m ;t_j)\right)^{r_j}\left(\sum_{|m|\leq (p-1)/2} \bigg(\frac{k-m}p\bigg) \overline{\alpha_p(m ;t_j)}\right)^{s_j}\\
 = & \sum_{\mathbf{m_1}, \dots, \mathbf{m_J}} \prod_{j=1}^J\prod_{\ell=1}^{r_j+s_j} \left(\frac{k-m_{j, \ell}}{p}\right)\prod_{j=1}^J \alpha_p(\mathbf{m_j}; t_j),\\
\end{align*}
where the quantity
$$ \mathbf{m_j}= (m_{j,1}, \dots, m_{j, r_j}, m_{j, r_j+1}, \dots, m_{j, r_j+s_j})$$
ranges over all $(r_j+s_j)$-tuples of integers $|m_{j, l}|\leq (p-1)/2$, and 
$$  \alpha_p(\mathbf{m_j}; t_j)= \alpha_p(m_{j,1}; t_j)\cdots \alpha_p(m_{j,r_j}; t_j) \overline{\alpha_p(m_{j,r_j+1}; t_j)\cdots \alpha_p(m_{j,r_j+s_j}; t_j)}. $$
Therefore, we obtain 
$$  \frac{1}{p} \sum_{k \in \mathbb{F}_p}\prod_{j=1}^J G_p(k, t_j)^{r_j} \overline{G_p(k, t_j)}^{s_j}= \sum_{\mathbf{m_1}, \dots, \mathbf{m_J}} \prod_{j=1}^J \alpha_p(\mathbf{m_j}; t_j)\frac{1}{p} \sum_{k \in \mathbb{F}_p}\prod_{j=1}^J\prod_{\ell=1}^{r_j+s_j} \left(\frac{k-m_{j, \ell}}{p}\right).$$
We now use the Weil bound for character sums (see \cite[Corollary 11.24]{IwKo}), which is a consequence of Weil's proof of the Riemann Hypothesis for curves over finite fields, to get
$$ 
\frac{1}{p} \sum_{k \in \mathbb{F}_p}\prod_{j=1}^J\prod_{\ell=1}^{r_j+s_j} \left(\frac{k-m_{j, \ell}}{p}\right) = \ex\left(\prod_{j=1}^J\prod_{\ell=1}^{r_j+s_j} \X(m_{j, \ell})\right) + O\left(\frac{r+s}{\sqrt{p}}\right).
$$
Thus, we derive 
\begin{align}\label{MomentsWeil}
\frac{1}{p} \sum_{k \in \mathbb{F}_p}\prod_{j=1}^J G_p(k, t_j)^{r_j} \overline{G_p(k, t_j)}^{s_j} & = \sum_{\mathbf{m_1}, \dots, \mathbf{m_J}} \prod_{j=1}^J \alpha_p(\mathbf{m_j}; t_j) \ex\left(\prod_{j=1}^J\prod_{\ell=1}^{r_j+s_j} \X(m_{j, \ell})\right)  + E_1 \nonumber \\
& = \ex\left(\sum_{\mathbf{m_1}, \dots, \mathbf{m_J}} \prod_{j=1}^J \alpha_p(\mathbf{m_j}; t_j) \prod_{j=1}^J\prod_{\ell=1}^{r_j+s_j} \X(m_{j, \ell})\right) +E_1 \nonumber\\
& = \ex\left(\prod_{j=1}^J 
\widetilde{G}_{\X, p}(t_j)^{r_j} \overline{\widetilde{G}_{\X, p}(t_j)}^{s_j}\right)+ E_1,
\end{align}

where 
\begin{align*}
 E_1 &\ll_{r, s} \frac{1}{\sqrt{p}} \sum_{\mathbf{m_1}, \dots, \mathbf{m_J}} \prod_{j=1}^J |\alpha_p(\mathbf{m_j}; t_j)| = \frac{1}{\sqrt{p}}\prod_{j=1}^J \left(\sum_{|m|\leq (p-1)/2} |\alpha_p(m ,t_j)|\right)^{r_j+s_j}  \ll_{r, s} \frac{(\log p)^{r+s}}{\sqrt{p}},
 \end{align*}
since uniformly for $t\in [0,1]$ we have $|\alpha_p(m ,t)|\ll 1/|m|$ for $|m|\geq 2$ and hence
$$ \sum_{|m|\leq (p-1)/2} |\alpha_p(m ,t)| \ll 1 + \sum_{2\leq |m|\leq (p-1)/2} \frac{1}{|m|} \ll \log p.$$
Let $\Y_j= G_{\X}(t_j)- \widetilde{G}_{\X, p}(t_j).$ Then, it follows from Lemma \ref{SecondMomentApprox} that there exists an absolute constant $C>0$ such that for all $1\leq j\leq J$ we have $ \ex(|\Y_j|^2)\leq C/p$. Therefore, by the Cauchy-Schwarz inequality we obtain 
\begin{equation}\label{CauchyMomentsDiff}
\ex\left(|\Y_{j_1}\Y_{j_2}\cdots \Y_{j_{\ell}}|\right) \leq \ex\left(|\Y_{j_1}|^2\right)^{1/2} \ex\left(|\Y_{j_2}\cdots \Y_{j_{\ell}}|^2\right)^{1/2}\ll_{\ell} \frac{1}{\sqrt{p}},
\end{equation}
for any $\ell\geq 1$ and $1\leq j_1\leq j_2 \leq \dots j_{\ell}\leq J.$ Hence, using that 
$\sup_{t\in[0,1]}|\widetilde{G}_{\X, p}(t)|\ll \log p$ we deduce that 
\begin{align*}
& \ex\left(\prod_{j=1}^J 
G_{\X}(t_j)^{r_j} \overline{G_{\X}(t_j)}^{s_j} \right)\\
= & \ex\left(\prod_{j=1}^J 
(\widetilde{G}_{\X, p}(t_j)+\Y_j)^{r_j} \big(\overline{\widetilde{G}_{\X, p}(t_j)}+\overline{\Y_j}\big)^{s_j} \right)\\
= & \ex\left(\prod_{j=1}^J\left(\sum_{\ell=0}^{r_j} \binom{r_j}{\ell}  \Y_j^{\ell} \widetilde{G}_{\X, p}(t_j)^{r_j-\ell}\right)
\left(\sum_{m=0}^{s_j} \binom{s_j}{m} \overline{\Y_j}^{m}  \overline{\widetilde{G}_{\X, p}(t_j)}^{s_j-m}\right)\right)\\
= & \sum_{\substack{(\ell_1, \dots, \ell_{2J}) \\ 0\leq \ell_j\leq r_j \\ 0\leq \ell_{j+J} \leq s_j}} 
\ex\left(\prod_{j=1}^J \binom{r_j}{\ell_j} \binom{s_j}{\ell_{j+J}} \Y_j^{\ell_j}\overline{\Y_j}^{\ell_{j+J}} \widetilde{G}_{\X, p}(t_j)^{r_j-\ell_j}\overline{\widetilde{G}_{\X, p}(t_j)}^{s_j-\ell_{j+J}}\right) \\
= & \ex\left(\prod_{j=1}^J\widetilde{G}_{\X, p}(t_j)^{r_j}\overline{\widetilde{G}_{\X, p}(t_j)}^{s_j}\right) + 
O_{s, r}\Bigg((\log p)^{s+r} \sum_{\substack{(\ell_1, \dots, \ell_{2J})\neq (0, 0, \dots, 0) \\ 0\leq \ell_j\leq r_j \\ 0\leq \ell_{j+J} \leq s_j}} 
\ex\left(\prod_{j=1}^J |\Y_j|^{\ell_j+\ell_{j+J}}\right)\Bigg)\\
= & \ex\left(\prod_{j=1}^J\widetilde{G}_{\X, p}(t_j)^{r_j}\overline{\widetilde{G}_{\X, p}(t_j)}^{s_j}\right)  + O_{s, r}\left(\frac{(\log p)^{s+r}}{\sqrt{p}}\right),
\end{align*}
by \eqref{CauchyMomentsDiff}. Combining this with \eqref{MomentsWeil} completes the proof.

\end{proof}
 \subsection{Tightness of the process}\label{tightness}
 
We now invoke Prokhorov's theorem \cite[Theorem $6.1$]{Bill} which asserts that if a sequence of probability measures is tight, then it must be relatively compact. In this section, we will prove the tightness of the process $G_p(.,t)$ using the following criterion due to Kolmogorov   (see \cite[Proposition B.11.10]{KoBook}). 
 
 \begin{lem}\label{Kolmogorov}
 Let $(L_p(t))_{t \in  \left[0,1\right]}$ be a sequence of $C(\left[0,1\right])$- valued processes. 
 If there exist constants $\alpha >0, \delta>0$ and $C \geq 0$ such that for any $p$ and any $s<t$ in $\left[0,1\right]$ we have 
 $$ \mathbb{E}(\left\vert L_p(t)-L_p(s)\right\vert^{\alpha})\leq C\left\vert t-s\right\vert^{1+\delta},   $$
 then the sequence $(L_p(t))_{t \in  \left[0,1\right]}$ is tight. \end{lem}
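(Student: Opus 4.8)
The plan is to deduce tightness from the standard characterization of tightness of probability measures on $C([0,1])$ (see \cite{Bill}): the sequence $(L_p)$ is tight precisely when (i) the laws of the values $L_p(0)$ are tight in $\mathbb{C}$, and (ii) for every $\epsilon>0$ the modulus of continuity is controlled, in the sense that
\[\lim_{\rho\to 0}\ \limsup_{p}\ \mathbb{P}\left(\sup_{\substack{s,t\in[0,1]\\ |s-t|\le\rho}}|L_p(t)-L_p(s)|\ge\epsilon\right)=0.\]
Condition (i) is not forced by the increment hypothesis alone, but it is a mild and easily verified condition in any concrete application (in our setting the values $L_p(0)$ lie in a fixed bounded set, so their laws are automatically tight); the substance of Kolmogorov's criterion lies in (ii). The heart of the matter is therefore to extract the uniform modulus-of-continuity bound (ii) from the single moment hypothesis $\mathbb{E}(|L_p(t)-L_p(s)|^{\alpha})\le C|t-s|^{1+\delta}$.

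For (ii) I would run the classical Kolmogorov--Chentsov chaining argument, tracking the uniformity in $p$. Fix a parameter $\gamma$ with $0<\gamma<\delta/\alpha$ and consider the dyadic grid $\{j2^{-n}:0\le j\le 2^n\}$. Applying the union bound over the $2^n$ consecutive dyadic increments of scale $n$ together with Markov's inequality and the moment hypothesis gives
\[\mathbb{P}\left(\max_{0\le j<2^n}\big|L_p((j+1)2^{-n})-L_p(j2^{-n})\big|\ge 2^{-\gamma n}\right)\le 2^n\cdot \frac{C\,2^{-n(1+\delta)}}{2^{-\gamma\alpha n}}=C\,2^{-n(\delta-\gamma\alpha)},\]
and the exponent $\delta-\gamma\alpha$ is strictly positive by the choice of $\gamma$, so these probabilities are summable in $n$ uniformly in $p$. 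Defining the event $A_{p,N}$ on which every dyadic increment of scale $n\ge N$ is at most $2^{-\gamma n}$, one gets $\mathbb{P}(A_{p,N}^c)\le C\sum_{n\ge N}2^{-n(\delta-\gamma\alpha)}$, which tends to $0$ as $N\to\infty$ uniformly in $p$.

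The remaining step is the telescoping (chaining) estimate. On the event $A_{p,N}$, any two points $s,t$ with $|s-t|\le 2^{-N}$ are joined through their finite dyadic expansions, and summing the controlled increments along this path — using only scales $n\ge N$, so that the geometric tail $\sum_{n\ge N}2^{-\gamma n}$ governs the sum — yields a bound of the shape $|L_p(t)-L_p(s)|\le K\,2^{-\gamma N}$ for an absolute constant $K=K(\gamma)$; here continuity of the $C([0,1])$-valued processes extends the estimate from dyadic points to all $s,t$. Equivalently, $\sup_{|s-t|\le 2^{-N}}|L_p(t)-L_p(s)|\le K\,2^{-\gamma N}$ on $A_{p,N}$. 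Combining this with the uniform bound on $\mathbb{P}(A_{p,N}^c)$ and taking $\rho=2^{-N}$, $N\to\infty$, produces exactly the uniform control (ii), which together with the trivial verification of (i) completes the verification of tightness.

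I expect the main technical obstacle to be this chaining step: converting the control of the finitely many dyadic increments into a genuine H\"older-type modulus of continuity valid for \emph{all} pairs $s,t$, while keeping every constant and threshold independent of $p$. The two points to watch are that the path joining $s$ and $t$ must use increments only at scales $\ge N$ (so the summable tail, not the full series, controls the estimate), and that the uniformity in $p$ is inherited directly from the fact that the moment hypothesis is assumed uniformly in $p$.
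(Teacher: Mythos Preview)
The paper does not actually prove this lemma: it is stated as a known criterion due to Kolmogorov and simply cited from \cite[Proposition B.11.10]{KoBook}. Your proposal supplies the standard Kolmogorov--Chentsov chaining argument, which is correct and is exactly what lies behind the cited reference.

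One point worth flagging: you correctly observe that the increment bound alone does not force tightness of the one-dimensional marginals $L_p(0)$ --- indeed the lemma as literally stated in the paper is slightly incomplete on this point (take $L_p(t)\equiv p$). In the paper's application to $G_p(k,t)$ one has $G_p(k,0)=0$ for every $k$, so condition (i) is trivially satisfied there, which is why the authors can afford to suppress it.
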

To set things up, we begin by recalling several classical facts needed in the proof. The first one is due to Gauss.
 \begin{lem}\label{quadsumLegendre}
 Let $p\geq 3$ be a prime number. For all integers $n$ we have
 
$$   \sum_{k=1}^{p} \left(\frac{k(k+n)}{p}\right)= \begin{cases}
p-1 &\textrm{ if } p\mid n,\\
-1 & \textrm{ otherwise.}
\end{cases} $$
 \end{lem} 
 The next lemma is the well-known inequality due to Bernstein \cite{Bernstein}.
 \begin{lem}\label{Bernstein}
For any trigonometric polynomial $P$ of degree $n$, we have 
$$ \max_{\vert z\vert=1} \vert P'(z)\vert \leq n \max_{\vert z\vert=1} \vert P(z)\vert  .$$
\end{lem}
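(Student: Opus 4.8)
The plan is to deduce this classical inequality of Bernstein in two reduction steps, followed by a proof in the basic case of real trigonometric polynomials.

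First I would pass from the algebraic formulation on the unit circle to the trigonometric one. Writing $z=e^{i\theta}$ and $f(\theta):=P(e^{i\theta})$, the function $f$ is a trigonometric polynomial of degree $n$, and the chain rule gives $\tfrac{d}{d\theta}f(\theta)=i\,e^{i\theta}P'(e^{i\theta})$, so that $|f'(\theta)|=|P'(e^{i\theta})|$ for every $\theta$. Consequently $\max_{|z|=1}|P'(z)|=\max_\theta|f'(\theta)|$ and $\max_{|z|=1}|P(z)|=\max_\theta|f(\theta)|$, and the statement becomes the assertion that $\max_\theta|f'(\theta)|\le n\max_\theta|f(\theta)|$ for trigonometric polynomials of degree $n$. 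Next I would reduce to \emph{real} trigonometric polynomials: fixing a point $\theta_0$ and writing $T'(\theta_0)=e^{i\phi}|T'(\theta_0)|$, the real trigonometric polynomial $R(\theta):=\re\big(e^{-i\phi}T(\theta)\big)$ has degree $\le n$, satisfies $\max_\theta|R(\theta)|\le\max_\theta|T(\theta)|$, and obeys $R'(\theta_0)=|T'(\theta_0)|$. Thus it suffices to prove $|R'(\theta_0)|\le n\max_\theta|R|$ for real $T=R$ at an arbitrary point.

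For the real case I would use the interpolation formula of M.\ Riesz, which expresses the derivative of a real trigonometric polynomial of degree $n$ as a finite combination of its own translates:
\[
T'(x)=\sum_{k=1}^{2n}\gamma_k\,T(x+t_k),\qquad t_k=\frac{(2k-1)\pi}{2n},\quad \gamma_k=\frac{(-1)^{k-1}}{4n\sin^2(t_k/2)}.
\]
Granting this identity, the inequality is immediate from the triangle inequality, since $|T'(x)|\le\big(\sum_k|\gamma_k|\big)\max_\theta|T(\theta)|$, provided one knows that $\sum_{k=1}^{2n}|\gamma_k|=n$. This last evaluation I would obtain for free by testing the (linear) identity on the explicit polynomial $y\mapsto\sin\!\big(n(y-x)\big)$: its derivative at $x$ equals $n$, while its value at $x+t_k$ equals $\sin(nt_k)=(-1)^{k-1}$, and because $\gamma_k=(-1)^{k-1}|\gamma_k|$ the right-hand side collapses to $\sum_k|\gamma_k|$, forcing $\sum_k|\gamma_k|=n$.

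The genuine work, and the step I expect to be the main obstacle, is establishing the interpolation formula itself: one must verify that the functional $T\mapsto T'(x)$ is correctly reproduced by sampling $T$ at the $2n$ nodes $x+t_k$, which can be done by checking the identity on the basis $\{\cos m\theta,\sin m\theta\}_{0\le m\le n}$ using the closed form of the relevant conjugate kernel, and by confirming that the nodes are placed so that the constant term (on which $T'(x)$ vanishes) drops out. As an alternative that avoids the interpolation identity, I would instead run a comparison/zero-counting argument: assuming $\max_\theta|T|\le 1$ but $T'(\theta_0)>n$ for some $\theta_0$, one compares $T$ with $U(\theta)=\cos\!\big(n(\theta-\theta_0)+\gamma\big)$, where $\gamma\in[0,\pi]$ is chosen so that $U(\theta_0)=\cos\gamma=T(\theta_0)$; then $V=T-U$ has at least $2n$ weak sign changes forced by the alternating extrema $U=\pm1$ together with $|T|\le 1$, while the strict inequality $V'(\theta_0)=T'(\theta_0)+n\sin\gamma>0$ at the zero $V(\theta_0)=0$ produces one extra zero, contradicting that a degree-$n$ trigonometric polynomial has at most $2n$ zeros per period. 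In this route the delicate point is handling the weak inequalities at tangency points so as to make the zero count strictly exceed $2n$.
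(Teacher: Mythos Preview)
The paper does not prove this lemma at all: it is stated as ``the well-known inequality due to Bernstein'' with a citation to \cite{Bernstein}, and is then applied directly to obtain the bound \eqref{boundsupderivative}. So there is no argument in the paper to compare against.

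Your proposal is a correct classical proof. The two reduction steps (passing from $P(e^{i\theta})$ to a trigonometric polynomial, then to a real one by rotating the phase at the extremal point) are standard and valid. The M.\ Riesz interpolation formula you quote is correct, and your trick of evaluating $\sum_k|\gamma_k|$ by testing on $\sin\big(n(y-x)\big)$ is exactly the usual way to close the argument. The alternative zero-counting route (comparison with a shifted cosine and counting sign changes of $V=T-U$) is also classical and works; your caveat about handling tangency points carefully is the right place to be cautious, but the standard fix---perturbing $U$ by a factor $1+\delta$ or invoking multiplicities---resolves it. Either route would be perfectly acceptable here, and both go well beyond what the paper itself supplies.
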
 
Montgomery \cite{mont} proved that for any $p>2$ we have
$ \max_{\vert z\vert=1} \vert F_p(z)\vert \ll \sqrt{p}\log p,$ which together with Lemma \ref{Bernstein} yields 

\begin{equation}\label{boundsupderivative} \max_{\vert z\vert=1} \vert F'_p(z)\vert   \ll p^{3/2} \log p. \end{equation}
 Our goal now is to prove the following equicontinuity result.
 \begin{lem}\label{lemlarge}
Let $p$ be an odd prime. There exists an absolute constant $C,$ independent of $p,$ such that for all $0\leq s<t\leq 1$, 
$$\mathbb{E}\left(\left\vert G_p(k,t)-G_p(k,s) \right\vert^{2}\right) := \frac{1}{p}\sum_{k=0}^{p-1} \left\vert G_p(k,t)-G_p(k,s) \right\vert^{2} \leq C |t-s|^{3/2}.$$
\end{lem}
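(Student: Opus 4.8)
The plan is to recognize this as Kolmogorov's criterion (Lemma \ref{Kolmogorov}) with $\alpha=2$ and $\delta=\tfrac12$, and to prove it by expanding the average over $k$, applying Gauss's Lemma \ref{quadsumLegendre} to exploit the quasi-orthogonality of the shifted Legendre symbols, and thereby collapsing the whole estimate into a purely deterministic smoothness statement for the coefficients $\alpha_p(m;\cdot)$. First I would use the identity from the Plan of attack to write
$$G_p(k,t)-G_p(k,s)=\sum_{|m|\le(p-1)/2}\Big(\tfrac{k-m}{p}\Big)\,\beta_m,\qquad \beta_m:=\alpha_p(m;t)-\alpha_p(m;s),$$
square, and average over $k\in\mathbb{F}_p$. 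The cross terms produce $\frac1p\sum_{k}\big(\tfrac{k-m}{p}\big)\big(\tfrac{k-n}{p}\big)$, which by $p$-periodicity and the substitution $k\mapsto k-m$ is exactly the sum in Lemma \ref{quadsumLegendre}; it equals $\tfrac{p-1}{p}$ when $m=n$ and $-\tfrac1p$ otherwise, since $|m|,|n|\le(p-1)/2$ forces $m\equiv n\pmod p\iff m=n$. Summing the diagonal and off-diagonal contributions gives the clean identity
$$\frac1p\sum_{k=0}^{p-1}\big|G_p(k,t)-G_p(k,s)\big|^2=\sum_{|m|\le(p-1)/2}|\beta_m|^2-\frac1p\Big|\sum_{|m|\le(p-1)/2}\beta_m\Big|^2\le\sum_{|m|\le(p-1)/2}|\beta_m|^2.$$
Thus it suffices to bound $\sum_m|\beta_m|^2$.

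Writing $h=|t-s|$ and $\beta_m=\int_s^t\partial_u\alpha_p(m;u)\,du$, I would reduce everything to the uniform derivative estimate
$$\Big|\partial_u\alpha_p(m;u)\Big|\ll\frac{1}{\max(1,|m|)}\qquad\big(|m|\le(p-1)/2,\ u\in[0,1]\big),$$
with implied constant independent of $p$. Granting this, $|\beta_m|\ll h/\max(1,|m|)$, so $\sum_m|\beta_m|^2\ll h^2\sum_m\max(1,|m|)^{-2}\ll h^2\le h^{3/2}$ (using $h\le1$), which is in fact stronger than the stated bound.

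The main obstacle is establishing this derivative bound uniformly in $p$, and the only delicate cases are $m\in\{0,1\}$: there the denominator $p(e((m-u)/p)-1)$ of $\alpha_p$ vanishes at $u=0$, resp.\ $u=1$, and a naive termwise differentiation yields spurious singularities (the two pieces of $\partial_u\alpha_p$ each blow up like $1/u$ while their sum stays bounded). To make the cancellation transparent I would pass to the trigonometric factorization
$$\alpha_p(m;u)=e\!\left(\frac{u}{2}-\frac{m-u}{2p}\right)\frac{\sin\pi u}{p\sin\big(\pi(m-u)/p\big)}=e\!\left(\frac{u}{2}-\frac{m-u}{2p}\right)\frac{\sin\pi u}{\pi(m-u)}\cdot\frac{1}{\operatorname{sinc}\!\big((m-u)/p\big)},$$
where $\operatorname{sinc}(x)=\sin(\pi x)/(\pi x)$. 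The phase has derivative $\ll1$; the factor $\sin(\pi u)/(\pi(m-u))$ is smooth on $[0,1]$ with value and derivative $\ll\max(1,|m|)^{-1}$ (its apparent pole at $u=m$, present only for $m\in\{0,1\}$ at $u=0$, resp.\ $u=1$, is cancelled by the zero of $\sin\pi u$ there); and since $|m-u|\le(p+1)/2$ keeps $(m-u)/p$ in a fixed compact subinterval of $(-1,1)$, the factor $\operatorname{sinc}((m-u)/p)$ is bounded below by an absolute positive constant with $u$-derivative $\ll1/p$. Differentiating the product then yields the claimed bound, while for $|m|\ge2$ the bound is immediate from $|m-u|\asymp|m|$ with no cancellation needed.

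Assembling these steps completes the proof: Lemma \ref{quadsumLegendre} converts the probabilistic average into $\sum_m|\beta_m|^2$, the mean value theorem reduces this to a derivative bound, and the factorization delivers that bound uniformly in $p$, giving $\frac1p\sum_k|G_p(k,t)-G_p(k,s)|^2\ll h^2\le C|t-s|^{3/2}$ with room to spare.
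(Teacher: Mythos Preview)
Your proof is correct and actually gives the stronger bound $\ll|t-s|^2$, but it follows a genuinely different (and cleaner) route than the paper's. The paper does not bound $|\partial_u\alpha_p(m;u)|$ directly; instead it invokes the approximation \eqref{ApproxCoefProcess}, $\alpha_p(m;u)=g_m(u)+O(1/p)$ with $g_m(u)=(e(u)-1)/(2\pi i(m-u))$, and applies the mean value theorem to $g_m$. This injects an $O(1/p)$ error per coefficient, so after summing one only gets $\sum_m|\beta_m|^2\ll|t-s|^2+1/p$, and the extra $1/p$ forces a two-regime split: for $|t-s|>1/\log^4 p$ one absorbs $1/p\ll|t-s|^{3/2}$, while for $|t-s|\le 1/\log^4 p$ the paper falls back on Montgomery's sup bound $\max_{|z|=1}|F_p(z)|\ll\sqrt{p}\log p$ together with Bernstein's inequality (Lemma \ref{Bernstein}) to get $|G_p(k,t)-G_p(k,s)|\ll|t-s|\log p$ pointwise. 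Your trigonometric factorization of $\alpha_p$ sidesteps the $O(1/p)$ loss entirely and removes the need for both the two-case split and the Montgomery--Bernstein input; the price is the short extra analysis of the removable singularity at $m\in\{0,1\}$, which you handle correctly via the $\operatorname{sinc}$ rewriting. Your exact identity $\frac1p\sum_k|\cdots|^2=\sum_m|\beta_m|^2-\frac1p\big|\sum_m\beta_m\big|^2$ is also slightly sharper than the paper's bound $\le 2\sum_m|\beta_m|^2$ obtained via Cauchy--Schwarz on the off-diagonal.
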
 
\begin{proof}
Applying the mean-value theorem together with \eqref{boundsupderivative} yields
\begin{align*}\label{boundsmall} \vert G_p(k,t)-G_p(k,s) \vert &= \frac{1}{\sqrt{p}}\left\vert F_p\left(e\left(\frac{k+t}{p}\right)\right)-F_p\left(e\left(\frac{k+s}{p}\right)\right)\right\vert \\
& \ll  p(\log p) \left\vert \frac{k+t}{p}-\frac{k+s}{p} \right\vert \\
& \ll   |t-s| (\log p).
\end{align*} Consequently, if $\vert t-s\vert \leq \frac{1}{\log^4 p},$ we have
$$\mathbb{E}\left\vert G_p(k,t)-G_p(k,s) \right\vert^{2} \ll |t-s|^2 \log^2 p\ll |t-s|^{3/2}$$
 and the desired bound follows. We are left to consider the range $\vert t-s\vert > \frac{1}{\log^4 p}.$
To this end, we show that there exist absolute positive constants $C_1$ and $C_2$, such that for all $0\leq s<t\leq 1$, 
\begin{equation}\label{keytight}
\mathbb{E}\left\vert G_p(k,t)-G_p(k,s) \right\vert^{2} \leq C_1 |t-s|^2 +\frac{C_2}{p}.
\end{equation}
Indeed, if \eqref{keytight} holds, then since $\frac{1}{p^{2/3}} \ll \frac{1}{\log^4 p},$ in the range $\vert t-s\vert > \frac{1}{\log^4 p}$ we have
$$ \mathbb{E}\left\vert G_p(k,t)-G_p(k,s) \right\vert^{2} \leq C_3|t-s|^{3/2}$$ for some absolute constant $C_3>0.$\\
We now write
$$\left\vert G_p(k,t)-G_p(k,s)\right\vert^2 =  \left\vert \sum_{\vert m\vert \leq (p-1)/2} \left(\frac{k-m}{p}\right)a_{m,p}(s,t)\right\vert^2$$ where 
$a_{m,p}(s,t) = \alpha_p(m;s)-\alpha_p(m;t).$

By the approximation \eqref{ApproxCoefProcess} we can write \begin{equation}\label{alpham}a_{m,p}(s,t) = g_m(s)-g_m(t) + O \left(\frac{1}{p}\right)\end{equation} with $g_m(t):= \frac{e(t)-1}{2i \pi (m-t)}$. For any $\vert m\vert \leq (p-1)/2$, we have
\begin{equation}\label{bound-derivg}\max_{t\in [0,1]} \vert g'_m(t)\vert \leq C_4\min\left(\frac{1}{m},1\right) \end{equation} for some absolute constant $C_4>0$. 
Using \eqref{alpham} together with \eqref{bound-derivg} and applying the mean-value theorem, we deduce the bound
\begin{equation}\label{boundan} \vert a_{m,p}(s,t)\vert \leq C_4\vert t-s\vert \min\left(\frac{1}{m},1\right)+\frac{C_5}{p} \end{equation} for some absolute constant $C_5>0.$ Taking the expectation, using Lemma \ref{quadsumLegendre} and the Cauchy-Schwarz inequality, we arrive at 
\begin{align*}\mathbb{E}\left(\left\vert G_p(k,t)-G_p(k,s)\right\vert^2\right)& = \frac{1}{p}\sum_{k=0}^{p-1}\left\vert \sum_{\vert m\vert \leq (p-1)/2} \left(\frac{k-m}{p}\right)a_{m,p}(s,t)\right\vert^2 \\
& \leq \sum_{\vert m\vert \leq (p-1)/2} \vert a_{m,p}(s,t)\vert^2 
 + \frac{1}{p} \sum_{\substack{\vert m\vert, \vert m'\vert \leq (p-1)/2 \\ m \neq m'}} \vert a_{m,p}(s,t)\vert \vert a_{m',p}(s,t)\vert  \\  & \leq 2 \sum_{\vert m\vert \leq (p-1)/2} \vert a_{m,p}(s,t)\vert^2.\end{align*} Applying \eqref{boundan} gives
\begin{align*} \sum_{\vert m\vert \leq (p-1)/2} \vert a_{m,p}(s,t)\vert^2 &\ll \sum_{1\leq m \leq p}\frac{\vert t-s\vert^2}{m^2}+\frac{1}{p^2}\sum_{\vert m\vert \leq (p-1)/2}1 \\
& \ll \vert t-s\vert^2 + 1/p\end{align*}
and \eqref{keytight} follows. This concludes the proof.

\end{proof}
Theorem \ref{convprocess} now immediately follows by combining Proposition \ref{ProMomentsFiniteDistribution} together with Lemma \ref{lemlarge}.


\section{$L^q$ norms and the distribution of values of Fekete polynomials: Proofs of Corollary \ref{cor:lqnorms} and Theorem \ref{main_convprocess}}\label{section-norms}
In this section we derive Corollary \ref{cor:lqnorms} and Theorem \ref{main_convprocess} as consequences of Theorem \ref{convprocess}.

\begin{proof}[Proof of Corollary \ref{cor:lqnorms}]For every $q>0,$ we consider a continuous functional on $C[0,1]$ given by
$$\ell_q(\phi)=\int_{0}^1 |\phi(t)|^q dt.$$ We have 
\[
\int_0^1| F_p(e(t))|^q dt = \frac 1p \sum_{k=0}^{p-1}    \int_0^1  \bigg| F_p\bigg( e\bigg( \frac{k+x}p\bigg) \bigg) \bigg|^q  dx
=  (p)^{q/2}\frac 1p \sum_{k=0}^{p-1}    \int_0^1  |G_p(k,t)|^q dx. 
\]
The result now immediately follows from Theorem \ref{convprocess}. 
\end{proof}
\begin{proof}[Proof of Theorem \ref{main_convprocess}] For every non-negative integers $k, j$ we consider a continuous functional on $C[0,1]$ given by
$$\ell_{k, j}(\phi)=\int_{0}^1 \phi(t)^k \overline{\phi(t)^j} dt.$$
Then we have 
$$ \int_0^1\left(\frac{F_p(e(t))}{F_p(\zeta_p)}\right)^k\left(\overline{\frac{F_p(e(t))}{F_p(\zeta_p)}}\right)^j dt = \frac 1p \sum_{k=0}^{p-1}    \int_0^1 \Bigg(\frac{F_p\big( e\big( \frac{k+x}p\big) \big)}{F_p(\zeta_p)}\Bigg)^k\Bigg(\overline{\frac{F_p\big( e\big( \frac{k+x}p\big) \big)}{F_p(\zeta_p)}}\Bigg)^jdx.$$
Therefore, it follows from Theorem \ref{convprocess} that 
$$ \lim_{p\to \infty}\int_0^1\left(\frac{F_p(e(t))}{F_p(\zeta_p)}\right)^k\left(\overline{\frac{F_p(e(t))}{F_p(\zeta_p)}}\right)^j dt= \mathbb{E}\left(\int_0^1 G_{\X}(t)^k \overline{G_{\X}(t)}^jdt\right)= \ex\left(G_{\X}(\theta)^k \overline{G_{\X}(\theta)}^j\right),$$
where $\theta$ is a random variable uniformly distributed on $[0,1]$. The result follows since $G_{\X}(\theta)$ is an absolutely continuous random variable by Lemma \ref{approx-distribution}.
\end{proof}


\section{The Mahler measure of Fekete polynomials: Proof of Theorem \ref{cor: Mahnorms}}\label{computemahler}
As was mentioned in the introduction, the main difficulty now is that the functional on ${C}[0,1]$ defined by $\ell(\phi)=\int_{0}^1\log (|\phi(t)|)dt$ is not continuous so our previous results do not directly apply.
To circumvent this problem, we define
\begin{equation}\label{defHp}
H_p(k,t):= \frac{2\pi i}{(e(t)-1)} G_p(k,t)= \sum_{|m|\leq (p-1)/2} \bigg(\frac{k-m}p\bigg)  \frac{2\pi i} {p(e( \frac{m-t}p)-1) },
\end{equation}
 and similarly 
 \begin{equation}\label{defHX}
 H_{\X}(t)=\frac{2\pi i}{(e(t)-1)} G_{\X}(t).
 \end{equation}Fix $\varepsilon>0$. First, we note that $H_p(k, t)$ is continuous on $[\varepsilon,1-\varepsilon]$ and $H_{\X}( t)$ is almost surely continuous on this interval by Lemma \ref{lem:smooth-process}.
Therefore, it follows from Theorem \ref{convprocess} that the sequence  of random processes $(H_p(k, t))_{t\in [\varepsilon,1-\varepsilon]}$  converges in law (in the space $C[\varepsilon,1-\varepsilon]$) to the process  $(H_{\X}(t))_{t\in [\varepsilon, 1-\varepsilon]}$.
We now consider the following functional $$\widetilde{\ell}_{\varepsilon}(\phi)=\int_{\varepsilon}^{1-\varepsilon}\log (|\phi(t)|)\mathbbm{1}_{|\phi(t)|\ge \varepsilon}dt.$$ Since this is a continuous functional on ${C}[\varepsilon,1-\varepsilon]$ we deduce that 
$$
\lim_{p\to\infty}\frac{1}{p}\sum_{0\le k\le p-1}\int_{\varepsilon}^{1-\varepsilon}\log (|{H}_p(k,t)|)\mathbbm{1}_{|{H}_p(k,t)|\ge \varepsilon}dt=\int_{\varepsilon}^{1-\varepsilon}\mathbb{E}\big(\log (|{H}_{\mathbb{X}}(t)|)\mathbbm{1}_{|{H}_{\mathbb{X}}(t)|\ge \varepsilon}\big)dt.
$$ In order to finish the proof of Theorem \ref{cor: Mahnorms}, we need to prove that the remaining contributions are small as $\varepsilon\to 0$, which is the content of Lemmas \ref{lem: log-trunc} and \ref{lem: logtrunc-01} below.
We now introduce the following approximation
\[
  \widetilde{G}_p(k, t):= \sum_{|m|\leq (p-1)/2} \bigg(\frac{k-m}p\bigg)  \frac{e(t)-1} {2\pi i (m-t) }
\] and, for technical reasons, perform our arguments in Lemma \ref{lem: log-trunc} with $\widetilde{G}_p(k,t).$

To this end, we further define 
\begin{equation}\label{defHptilde}
\widetilde{H}_p(k,t)=\sum_{|m|\leq (p-1)/2} \bigg(\frac{k-m}p\bigg)  \frac{1} { (m-t) }
\end{equation} so that $|\widetilde{G}_p(k,t)|=|\frac{(e(t)-1)}{2\pi} \widetilde{H}_{p}(k,t)|$. 
We now observe that $H_p(k,t)$ is a real-valued function of $t\in [0,1]$ and proceed to show that $H_p(k,t)$ and $\widetilde{H}_p(k,t)$ are close  in $C^{2}$ -topology.
\begin{lem}\label{lem-deriv}
For any $0\leq k \leq p-1$, the following bound holds 
 $$ \lVert   \widetilde{H}'_{p}(k,)-H'_{p}(k,)\rVert_{\infty} = O(1/p)$$ where $$ \lVert f\rVert_{\infty}:= \sup_{t\in (0,1)}\vert f(t)\vert.$$ Similarly, for the second derivatives we have
$$ \lVert   \widetilde{H}''_{p}(k,)-H''_{p}(k,)\rVert_{\infty} = O(1/p^2), \hspace{5mm} 0\leq k \leq p-1. $$

\end{lem}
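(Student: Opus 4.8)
The plan is to reduce the estimate to a single-term kernel bound and then sum trivially over $m$, exploiting that $H_p$ and $\widetilde H_p$ carry the \emph{same} coefficients $\left(\frac{k-m}{p}\right)$. First I would write $w=w(m,t):=(m-t)/p$ and introduce
\[
g(w):=\frac{2\pi i}{e(w)-1}-\frac{1}{w},
\]
so that the $m$-th kernel of $H_p$ minus that of $\widetilde H_p$ is exactly $\tfrac1p\,g(w)$. The mechanism that makes the lemma work is that each differentiation in $t$ costs a factor $1/p$, since $\partial_t w=-1/p$; by the chain rule
\[
\partial_t^{\,j}\!\Big(\tfrac1p\,g(w)\Big)=\frac{(-1)^{j}}{p^{\,j+1}}\,g^{(j)}(w),\qquad j=1,2.
\]
This is precisely why the statement concerns \emph{derivatives}: the undifferentiated kernel difference is only $O(1/p)$ per term and would sum to $O(1)$, whereas each derivative gains an extra factor $1/p$.

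The hard part is to show that $g'$ and $g''$ are bounded by absolute constants on the range of $w$. The key observation is that the apparent pole of $\tfrac{2\pi i}{e(w)-1}$ at $w=0$ is cancelled by $-1/w$: using $e(w)-1=2i\,e^{\pi i w}\sin(\pi w)$ one finds
\[
g(w)=\pi\cot(\pi w)-\pi i-\frac1w,
\]
and $\pi\cot(\pi w)-1/w$ is the classical function analytic on $(-1,1)$, its only pole there (at $w=0$) being removed. As $m$ runs over $|m|\le (p-1)/2$ and $t\in(0,1)$, a direct check gives $w\in\big(-\tfrac12-\tfrac1{2p},\,\tfrac12\big)\subset[-\tfrac23,\tfrac12]$ for all $p\ge 3$, a fixed compact subinterval of $(-1,1)$ bounded away from the nonzero integer poles of the cotangent. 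Hence $g'$ and $g''$ are continuous there, and therefore bounded by absolute constants $C$ and $C'$, uniformly in $m$, $t$ and $p$.

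To conclude I would apply the triangle inequality, using $\left|\left(\frac{k-m}{p}\right)\right|\le 1$ and that there are exactly $p$ indices $m$ with $|m|\le(p-1)/2$:
\[
\lVert \widetilde H_p'(k,\cdot)-H_p'(k,\cdot)\rVert_\infty\le\sum_{|m|\le (p-1)/2}\frac{|g'(w)|}{p^{2}}\le p\cdot\frac{C}{p^{2}}=O\!\Big(\tfrac1p\Big),
\]
and in exactly the same way $\lVert \widetilde H_p''(k,\cdot)-H_p''(k,\cdot)\rVert_\infty\le p\cdot C'/p^{3}=O(1/p^{2})$; no cancellation in the character sum is needed. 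The single genuine obstacle is the uniform control of $g'$ and $g''$, and once the removable singularity at $w=0$ is made explicit and $w$ is seen to remain in a compact subinterval of $(-1,1)$, the remaining estimate is purely termwise. I would also note in passing that the cotangent identity explains the preceding remark that $H_p(k,t)$ is real-valued: each kernel has the $m$-independent imaginary part $-\pi i/p$, and $\sum_{|m|\le(p-1)/2}\left(\frac{k-m}{p}\right)=0$ since $k-m$ runs through a complete residue system modulo $p$.
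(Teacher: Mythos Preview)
Your proposal is correct and follows essentially the same route as the paper: both subtract termwise, bound the $j$-th $t$-derivative of each kernel difference by $O(1/p^{j+1})$ uniformly in $m$, and sum trivially over the $p$ indices. The paper carries this out by explicit Taylor expansion of $\sin$ and $\cos$ (equations \eqref{approx-deriv1}--\eqref{approx-deriv2}), whereas your chain-rule reduction to the boundedness of $g'$ and $g''$ via the cotangent identity is a cleaner packaging of the same estimate; your side remark on the real-valuedness of $H_p$ is also correct and a nice bonus.
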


\begin{proof}
 Upon differentiating \eqref{defHptilde} and \eqref{defHp}, we obtain
  $$ \widetilde{H}'_p(k,t) = \sum_{|m|\leq (p-1)/2} \bigg(\frac{k-m}p\bigg)  \frac{1} { (m-t)^2} $$ and
$$H'_p(k,t) = \sum_{|m|\leq (p-1)/2} \bigg(\frac{k-m}p\bigg)  \frac{(2\pi i)^2e( \frac{m-t}p)}{p^2( e( \frac{m-t}p)-1)^2}.$$ Factoring out $e(\frac{m-t}{2})$ from the denominator of this last identity yields
$$H'_p(k,t) = \sum_{|m|\leq (p-1)/2} \bigg(\frac{k-m}p\bigg)  \frac{\pi^2}{p^2 \sin^2\left(\frac{\pi}{p}(m-t)\right)}.   $$
Using Taylor expansion, we have that, uniformly over $t\in [0, 1]$
\begin{equation}\label{approx-deriv1}
\frac{\pi^2}{p^2 \sin^2\left(\frac{\pi}{p}(m-t)\right)}= \frac{1}{(m-t)^2}+O\left(\frac{1}{p^2}\right).\end{equation}
Summing over $m$ and using \eqref{approx-deriv1} concludes the proof of the first part.
We now differentiate to end up with
$$ \widetilde{H}''_p(k,t) = \sum_{|m|\leq (p-1)/2} \bigg(\frac{k-m}p\bigg)  \frac{2} { (m-t)^3} $$ and
$$ H''_p(k,t) =  \frac{2\pi^3}{p^3} \sum_{|m|\leq (p-1)/2} \frac{\cos\left(\frac{\pi}{p}(m-t)\right)}{\sin^3\left(\frac{\pi}{p}(m-t)\right)}.$$ By Taylor expansion, uniformly over $t\in [0, 1]$ we have
\begin{equation}\label{approx-deriv2}
\frac{\pi^3}{p^3} \frac{\cos\left(\frac{\pi}{p}(m-t)\right)}{\sin^3\left(\frac{\pi}{p}(m-t)\right)}= \frac{1}{(m-t)^3}+O\left(\frac{m-t}{p^4}\right).\end{equation} Summing over $m$ and using \eqref{approx-deriv2} finishes the proof.

\end{proof}

The following technical result plays a crucial role in the proof of Theorem \ref{cor: Mahnorms}.
\begin{lem}\label{lem: log-trunc}
\leavevmode For any $0\leq k \leq p-1$, we have
 $$\int_{0}^1\log (|{\widetilde{H}}_{p}(k,t)|)\mathbbm{1}_{|\widetilde{H}_{p}(k,t)|\le \varepsilon}dt =O(\varepsilon^{6/25})  \text{ and }\int_{0}^1\log (|H_p(k,t)|)\mathbbm{1}_{|H_p(k,t)|\le \varepsilon}dt =O(\varepsilon^{6/25}).$$

 Moreover the same result holds for $H_{\X}$ almost surely, namely
 $$ \int_{0}^1\log (|H_{\X}(t)|)\mathbbm{1}_{|H_{\X}(t)|\le \varepsilon}dt=O(\varepsilon^{6/25}).$$
\end{lem}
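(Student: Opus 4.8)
The plan is to reduce the statement, in all three cases $f\in\{\widetilde H_p(k,\cdot),\,H_p(k,\cdot),\,H_{\X}(\cdot)\}$, to an estimate for the \emph{small-value set}
$$\mu_f(\delta):=\mathrm{m}\big(\{t\in[0,1]:|f(t)|\le \delta\}\big).$$
Since $\log(1/|f|)\ge 0$ on $\{|f|\le\varepsilon\}$ for $\varepsilon<1$, a layer-cake decomposition gives
$$\int_0^1 \big|\log|f|\big|\,\mathbbm{1}_{|f|\le\varepsilon}\,dt=\mu_f(\varepsilon)\log\tfrac1\varepsilon+\int_{\log(1/\varepsilon)}^{\infty}\mu_f(e^{-\lambda})\,d\lambda,$$
so any power bound $\mu_f(\delta)\ll \delta^{\beta}$ with $\beta>0$ yields $\int_0^1 |\log|f||\,\mathbbm{1}_{|f|\le\varepsilon}\,dt\ll_\eta \varepsilon^{\beta-\eta}$, and (recalling $\log|f|<0$ on the relevant set) this is exactly the required smallness. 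Taking $\beta=\tfrac12$ comfortably covers the claimed exponent $6/25$, so it suffices to prove $\mu_f(\delta)\ll\sqrt{\delta}$ uniformly in $p$ and $k$ and over all realisations of $\X$.

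The heart of the matter is a derivative dichotomy which I would establish first for $f=\widetilde H_p(k,\cdot)$ by isolating the two dominant terms $m=0$ and $m=1$. Writing $a=\left(\frac{k}{p}\right)$ and $b=\left(\frac{k-1}{p}\right)$, one has $f'(t)=a\,t^{-2}+b\,(1-t)^{-2}+R_1(t)$ and $f''(t)=-2a\,t^{-3}+2b\,(1-t)^{-3}+R_2(t)$, where the tails $R_1,R_2$ run over $|m|\ge2$ and are bounded by an absolute constant (at most $\sum_{|m|\ge2}|m-t|^{-2}\le 3$ and $2\sum_{|m|\ge2}|m-t|^{-3}\le 3$ respectively), because the characters have modulus one. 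Since $t^{-2},(1-t)^{-2}\ge 1$ on $(0,1)$, the two dominant terms cannot cancel in both derivatives simultaneously: if $a=b$ then $|f'(t)|\ge \big(t^{-2}+(1-t)^{-2}\big)-3\ge 8-3$, while if $a=-b$ then $|f''(t)|\ge 2\big(t^{-3}+(1-t)^{-3}\big)-3\ge 32-3$, the minima occurring at $t=\tfrac12$. This gives $\max(|f'(t)|,|f''(t)|)\gg 1$ for every $t\in(0,1)$, uniformly in $p$ and $k$.

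The same computation yields a \emph{global} structural fact that frees the argument from the degree of $F_p$: when $a=b$ the first derivative $f'$ has constant sign on $(0,1)$, and when $a=-b$ the second derivative $f''$ has constant sign. In the first case $f$ is strictly monotone on $(0,1)$, so $\{|f|\le\delta\}$ is a single interval of measure $\ll\delta$; in the second case $f$ is strictly convex or concave with $|f''|\gg1$, and the standard sublevel estimate for such functions gives $\mu_f(\delta)\ll\sqrt{\delta}$. Either way $\mu_f(\delta)\ll\sqrt{\delta}$, with no reference to the number of zeros of $F_p$, so the endpoint singularities at $t=0,1$ cause no difficulty.

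It remains to transfer this to $H_p$ and to the random process. For $H_p(k,\cdot)$ I would invoke Lemma \ref{lem-deriv}, which shows that $H_p'$ and $H_p''$ agree with $\widetilde H_p'$ and $\widetilde H_p''$ up to $O(1/p)$ in sup-norm; since the margins in the dichotomy are bounded below by absolute constants (of size $5$ and $29$), they survive the perturbation and the bound $\mu_{H_p}(\delta)\ll\sqrt{\delta}$ follows verbatim. For $H_{\X}(t)=\sum_{m\in\mathbb Z}\X(m)/(m-t)$ the argument is pathwise: because $|\X(m)|=1$, the tail bounds on $R_1,R_2$ hold for \emph{every} realisation with the same absolute constants, so the dichotomy, the constant-sign property, and hence $\mu_{H_{\X}}(\delta)\ll\sqrt{\delta}$ hold surely (the a.s.\ $C^2$-regularity being supplied by Lemma \ref{lem:smooth-process}), giving the almost sure statement at once. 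The main obstacle is the derivative dichotomy together with the observation that it simultaneously pins down the number of critical points; this is precisely what makes the small-value estimate uniform in $p$.
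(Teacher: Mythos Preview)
Your overall strategy coincides with the paper's: both rest on the dichotomy that for each $k$ either $|\widetilde H_p'(k,\cdot)|$ or $|\widetilde H_p''(k,\cdot)|$ is bounded below on $(0,1)$ by an absolute constant, after which the logarithmic integral is controlled (the paper by a direct change of variables and H\"older, you by the sublevel-set bound $\mu_f(\delta)\ll\delta^{1/2}$ fed into a layer-cake identity). That repackaging is valid and arguably tidier.

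There is, however, a real gap in the dichotomy step. You take $a=\big(\tfrac{k}{p}\big)$ and $b=\big(\tfrac{k-1}{p}\big)$ and treat both as lying in $\{\pm1\}$ (``the characters have modulus one''), but the Legendre symbol vanishes at $0$: one has $a=0$ when $k=0$ and $b=0$ when $k=1$. For these two values of $k$ your case split ``$a=b$ versus $a=-b$'' is vacuous, and the argument breaks down near the non-singular endpoint: for $k=0$, say, the only surviving ``dominant'' contribution to $f'$ near $t=0$ is $b(1-t)^{-2}\approx 1$, which the tail (which, incidentally, must also include $m=-1$, not only $|m|\ge 2$) can cancel, so neither $|f'|$ nor $|f''|$ is forced to be $\gg 1$ there. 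The paper deals with exactly this by treating $k=0$ and $k=1$ as separate cases, promoting a \emph{different} pair of neighbouring terms to the dominant role (for $k=0$ the terms $m=1$ and $m=-1$; for $k=1$ the terms $m=0$ and $m=2$) and redoing the monotonicity/convexity check with sharper numerical constants, which now requires splitting $t\le 1/2$ and $t>1/2$. Once you add these two special cases your argument goes through; note that the random case $H_{\X}$ is already fine as written, since $\X(0),\X(1)\in\{\pm1\}$ surely.
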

\begin{proof} We first prove the result in the case of the function $\widetilde{H}_p(k,t)$. The main idea is to show that either $|\widetilde{H}_{p}'(k,t)(t)|\gg 1,$ or $|\widetilde{H}_{p,k}''(t)|\gg 1.$ Then the same result for $H_{p}(k,t)$ follows from Lemma \ref{lem-deriv}. To do so, we split the discussion into three cases depending on the value of $k$. \begin{case} Here we consider the case $2\leq k \leq p-1.$  \end{case}
In this case we can write $$\widetilde{H}_{p}(k,t)=-\left(\frac{k}{p}\right)\Bigg(\frac{1}{t}+\frac{\delta_0}{1-t}+\sum_{\substack{m\ne 0,-1 \\ |m|\le \frac{p-1}{2}}}\frac{\delta_m}{m+t}\Bigg),$$
where $\delta_i=\pm 1,$ which depend on $k.$ Our argument covers all possible choices of $\pm 1$ coefficients and we can therefore from now on remove the factor $-(k/p)$, drop the condition on $k$ and hence consider 
$$\widetilde{H}_{p}(t):= -\left(\frac{k}{p}\right)\widetilde{H}_{p}(k,t)=\frac{1}{t}+\frac{\delta_0}{1-t}+\sum_{\substack{m\ne 0,-1 \\ |m|\le \frac{p-1}{2}}}\frac{\delta_m}{m+t},$$
for some $\delta_i=\pm 1.$
We first observe that each of the integrals is well-defined as an integral of a rational function with logarithmic singularities being integrable around poles and zeros. 
 We distinguish two subcases depending on the value of $\delta_0.$
 \begin{itemize}
\item In the case when $\delta_0=-1$, we have the bound
$$\widetilde{H}'_{p}(t)\le -\frac{1}{t^2}-\frac{1}{(1-t)^2}+\sum_{m\ne 0,-1,|m|\le p-1/2}\frac{1}{(t+m)^2}< -8+2\zeta(2)=c_1<0.$$
Since  $\lim_{t\to 0^+}\widetilde{H}_{p}(t)=-\lim_{t\to 1^-}\widetilde{H}_{p}(t)=\infty,$ we let $a, b\in [0,1]$ to be the unique solutions to $\widetilde{H}_{p}(a)=-\widetilde{H}_{p}(b)=\varepsilon.$ For a sufficiently small $\varepsilon>0,$ we have 
$|\log |\widetilde{H}_{p}(t)| \mathbbm{1}_{|\widetilde{H}_{p}(t)|\le \varepsilon}|\le\frac{1}{\sqrt{|\widetilde{H}_{p}(t)|}}$ whenever $\widetilde{H}_{p}(t)\ne 0$ and consequently 
$$\left|\int_{0}^1 \log |\widetilde{H}_{p}(t)| \mathbbm{1}_{|\widetilde{H}_{p}(t)|\le \varepsilon}dt\right|\le \int_{a}^b \frac{1}{-\widetilde{H}'_{p}(t)}\frac{-\widetilde{H}'_{p}(t)}{\sqrt{|\widetilde{H}_{p}(t)|}}dt\le \frac{4\sqrt{\varepsilon}}{-c_1} =O(\varepsilon^{6/25}).$$
\item In the case where $\delta_0=1,$ we have the bound
$$\frac{\widetilde{H}''_{p}(t)}{2}\ge\frac{1}{t^3}+\frac{1}{(1-t)^3}-\sum_{m\ne 0,-1,|m|\le p-1/2}\frac{1}{(t+m)^3}> 16-2\zeta(3)=c_2>0.$$
 We observe that $\lim_{t\to 0^+}\widetilde{H}_{p}(t)=\lim_{t\to 1^-}\widetilde{H}_{p}(t)=\infty$ and consider the following three possibilities:
\begin{enumerate}
 \item $\widetilde{H}_{p}(t)>\varepsilon$ for all $t\in [0,1].$ In this case our conclusion follows trivially.
\item There exist $0<t_1<t_2<t_3<t_4<1$ such that  $\widetilde{H}_{p}(t_1)=-\widetilde{H}_{p}(t_2)=-\widetilde{H}_{p}(t_3)=\widetilde{H}_{p}(t_4)=\varepsilon.$
\item There exist $0<t_1<t_2<1$ such that  $\widetilde{H}_{p}(t_1)=\widetilde{H}_{p}(t_2)=\varepsilon$ and $\widetilde{H}_{p}(t)> -\varepsilon$ for all $t\in[0,1].$
\end{enumerate}
Since the arguments in the second and third cases are identical, we focus on the second case. By symmetry, it suffices to show that   $$\int_{t_3}^{t_4} \log |\widetilde{H}_{p}(t)|dt=O(\varepsilon^{6/25}).$$ To this end we note that for sufficiently small $\varepsilon>0,$ $$|\log |\widetilde{H}_{p}(t)|\mathbbm{1}_{|\widetilde{H}_{p}(t)|\le \varepsilon}\vert\le\frac{1}{|\widetilde{H}_{p}(t)|^{1/100}}$$and $\widetilde{H}'_{p}(t)>0$ for all $t\in [t_3,t_4].$  Applying H\"{o}lder's inequality, we get
\begin{align*}
\left|\int_{t_3}^{t_4} \log |\widetilde{H}_{p}(t)| \mathbbm{1}_{|\widetilde{H}_{p}(t)|\le \varepsilon} dt\right|&\le \int_{t_3}^{t_4} \frac{1}{\widetilde{H}'_{p}(t)^{1/4}}\frac{\widetilde{H}'_{p}(t)^{1/4}}{|\widetilde{H}_{p}(t)|^{1/100}}dt\\&\le  \left(\int_{t_3}^{t_4} \frac{1}{\widetilde{H}'_{p}(t)^{1/3}}dt\right)^{3/4}\left(\int_{t_3}^{t_4} \frac{\widetilde{H}'_{p}(t)}{|\widetilde{H}_{p}(t)|^{1/25}}dt\right)^{1/4}.
\end{align*}
Integrating the second integral directly, we get an acceptable contribution of $O(\varepsilon^{6/25}).$ Since $\widetilde{H}''_{p}(t)\ge 2c_2,$ we have a linear minorant $\widetilde{H}'_{p}(t)\ge 2c_2(t-t_3)+\widetilde{H}'_{p}(t_3)\ge 2c_2(t-t_3) $ and consequently

$$ \int_{t_3}^{t_4} \frac{1}{\widetilde{H}'_{p}(t)^{1/3}}dt\ll \int_{t_3}^{t_4} \frac{1}{(t-t_3)^{1/3}}dt \ll (t_4-t_3)^{2/3}=O(1).$$
This concludes the proof of Case $1$.
\end{itemize}
\begin{case} Here we focus on the case $k=0.$  
\end{case}
In this case, the term corresponding to $m=0$ in the sum defining $\widetilde{H}_{p}(0,t)$ vanishes and we can write  
$$ \widetilde{H}_{p}(0,t)=\frac{1}{1-t}+\frac{\delta_0}{1+t}+\sum_{\substack{m\ne 0,-1,1 \\ |m|\le \frac{p-1}{2}}}\frac{\delta_m}{m+t},$$
with $\delta_i=\pm 1$.
We again distinguish two cases depending on the value of $\delta_0.$
 \begin{itemize}
\item If $\delta_0=-1$, we have the bound
\begin{align*} \widetilde{H}'_{p}(0,t) & \ge \frac{1}{(1-t)^2}+\frac{1}{(1+t)^2}-\sum_{\substack{|m|\le (p-1)/2 \\ m\neq 0,-1,1}} \frac{1}{(t+m)^2}. \end{align*} We consider two subcases. If $ t >1/2$, we have 
\begin{align*}
    \widetilde{H}'_{p}(0,t) & \ge 
\frac{1}{(1-t)^2}+\frac{1}{(1+t)^2}-\sum_{m\ge 2}\left\{\frac{1}{(t+m)^2}+\frac{1}{(m-t)^2}\right\} \\ &\ge 4-2\zeta(2)=c_{3,1} >0.\end{align*} On the other hand, if $0 \le t\le 1/2$ we get \begin{align*}
    \widetilde{H}'_{p}(0,t) & \ge 
\frac{1}{(1-t)^2}+\frac{1}{(1+t)^2}-\sum_{m\ge 2}\left\{\frac{1}{(1/2+m)^2}+\frac{1}{(m-1/2)^2}\right\} \\ &\ge
2-4\sum_{m \geq 2}\left\{\frac{1}{(2m+1)^2}+\frac{1}{(2m-1)^2} \right\}\ge
 2-4\left(\frac{\pi^2}{4}-2\right)=c_{3,2} >0.\end{align*}
\item If $\delta_0=1,$ then
\begin{align*} \frac{\widetilde{H}''_{p}(0,t)}{2} & \ge \frac{1}{(1-t)^3}+\frac{1}{(1+t)^3}-\sum_{\substack{|m|\le (p-1)/2 \\ m\neq 0,-1,1}} \frac{1}{(t+m)^3} \\
& \ge  2-\sum_{m\ge 2}\left\{\frac{1}{(t+m)^3}+\frac{1}{(m-t)^3}\right\}  \ge   2-(2\zeta(3)-1)=c_4>0.\end{align*}
\end{itemize}
The rest of the proof is identical to Case $1$. 
\begin{case} We finally consider $k=1.$  
\end{case}
We remark that in the sum defining $\widetilde{G}_p(1,t)$ the term corresponding to $m=1$ vanishes. Hence we write 
$$\widetilde{H}_{p}(1,t)=-\frac{1}{t}+\frac{\delta_0}{2-t}+\sum_{\substack{m\ne 0,-1,-2 \\ |m|\le \frac{p-1}{2}}}\frac{\delta_m}{m+t},$$
with $\delta_i=\pm 1$. We distinguish two cases depending on the value of $\delta_0.$
 \begin{itemize}
 \item If $\delta_0=1$, we have the bound
$$\widetilde{H}'_{p}(1,t)\ge \frac{1}{t^2}+\frac{1}{(2-t)^2}-\sum_{\substack{|m|\le (p-1)/2 \\ m \neq 0,-1,-2}}\frac{1}{(t+m)^2}.$$ We again split the treatment into two subcases. If $0 \le t\leq 1/2$, we have 
$$ \frac{1}{t^2}+\frac{1}{(2-t)^2}-\sum_{\substack{|m|\le (p-1)/2 \\ m \neq 0,-1,-2}}\frac{1}{(t+m)^2} \geq 4-2\zeta(2)=c_{5,1} >0.$$ On the other hand, if $t>1/2$ we get \begin{align*}
\widetilde{H}'_p(1,t) & \ge  \frac{1}{t^2}+\frac{1}{(2-t)^2}-\sum_{m \ge 1}\frac{1}{(m+1/2)^2}-\sum_{m \geq 3} \frac{1}{(m-1)^2}   \\ 
& \ge
2-\left\{4\left(\frac{\pi^2}{8}-1\right)+\zeta(2)-1\right\}=c_{5,2}>0.\end{align*}
 \item If $\delta_0=-1,$ then
$$\frac{\widetilde{H}''_{p}(1,t)}{2}\le-\frac{1}{t^3}-\frac{1}{(2-t)^3}+\sum_{\substack{|m|\le (p-1)/2 \\ m\neq 0,-1,-2}}\frac{1}{(t+m)^3}<- 2+(2\zeta(3)-1)=-c_4<0.$$  
\end{itemize}
The rest of the proof is again identical to Case $1$. \\
We now briefly comment on the remaining parts of the lemma. 
Using Lemma \ref{lem-deriv} and taking $p$ large enough we can assume that the first two derivatives of $\widetilde{H}_p(k,t)$ and $H_p(k,t)$ are $\nu$-close for a small and suitably chosen $\nu>0.$ It follows that the graph of $\widetilde{H}_p(k,t)$ and $H_p(k,t)$ have a ``similar" shape and therefore we can perform the identical arguments for $H_p(k,t)$ as was done for $\widetilde{H}_p(k,t)$ to conclude the result. \\
Notice that our argument relies only on the first terms in the expansion of the derivatives of $\widetilde{H}_p(k,t)$, the other terms being bounded trivially after extending the truncation to its full series. Furthermore, by Lemma \ref{lem:smooth-process} $H_{\X}$ is almost surely a real-analytic function on $(0,1)$ and the argument goes over as in Case $1$ for $\widetilde{H}_p(k,t)$, mutatis mutandis, for the function $H_{\X}$.
\end{proof}
We are now ready to prove our main statement about truncating the logarithmic integral.
\begin{lem}\label{lem: logtrunc-01}
Let $p$ be a large prime number and  $0<\varepsilon<1/2$ be a real number. Then we have 
\begin{equation}\label{eq: logtrunc-01-1}
\frac{1}{p} \sum_{0\leq k\leq p-1} \left(\int_{0}^{\varepsilon} + \int_{1-\varepsilon}^1\right) \log|H_p(k,t)|\mathbbm{1}_{|H_{p}(k,t)|\ge \varepsilon} dt\ll \varepsilon \log(1/\varepsilon).
\end{equation}
Moreover, a similar estimate holds in the random case, namely 
\begin{equation}\label{eq: logtrunc-01-2}
    \left(\int_{0}^{\varepsilon}+ \int_{1-\varepsilon}^1\right)\mathbb{E}\big(\log (|{H}_{\mathbb{X}}(t)|)\mathbbm{1}_{|{H}_{\mathbb{X}}(t)|\ge \varepsilon}\big)dt\ll \varepsilon \log(1/\varepsilon).
    \end{equation}
\end{lem}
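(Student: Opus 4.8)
The plan is to exploit the explicit relation between $H_p$ and $G_p$ in order to peel off a \emph{deterministic} singular factor whose logarithm integrates to exactly the right size, after which the problem collapses to a uniform second moment bound. Comparing \eqref{defHp} and \eqref{defHX} with the definition of $G_p(k,t)$ (resp. $G_{\X}(t)$) and using $|e(t)-1|=2|\sin \pi t|$, one has
$$|H_p(k,t)| = \frac{\pi}{|\sin \pi t|}\,|G_p(k,t)|, \qquad |H_{\X}(t)| = \frac{\pi}{|\sin \pi t|}\,|G_{\X}(t)|,$$
so that $\log|H_p(k,t)| = \log\frac{\pi}{|\sin \pi t|} + \log|G_p(k,t)|$. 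On the edge intervals $[0,\varepsilon]\cup[1-\varepsilon,1]$ with $\varepsilon<1/2$ the factor $\pi/|\sin \pi t|$ exceeds $1$, and the key elementary fact is that $\int_0^\varepsilon\log\frac{\pi}{\sin \pi t}\,dt\ll\varepsilon\log(1/\varepsilon)$ (and likewise near $t=1$ via $t\mapsto 1-t$), since $-\log\sin \pi t\le\log\frac{1}{2t}$ on $(0,1/2]$.

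First I would dispose of the lower tail. On the set $\{|H_p(k,t)|\ge\varepsilon\}$ we have $\log|H_p(k,t)|\ge\log\varepsilon$, so writing
$$\log|H_p|\,\mathbbm{1}_{|H_p|\ge\varepsilon} = \log^{+}|H_p| + \log|H_p|\,\mathbbm{1}_{\varepsilon\le|H_p|<1},$$
the second piece is bounded in absolute value by $\log(1/\varepsilon)$ pointwise; integrating over an interval of length $\varepsilon$ and averaging over $k$ contributes $\ll\varepsilon\log(1/\varepsilon)$. For the remaining term I would use $\log^{+}(ab)\le\log a + \log^{+}b$ valid for $a\ge1$, together with $\log^{+}x\le\tfrac12 x^2$, to obtain
$$\log^{+}|H_p(k,t)|\le\log\frac{\pi}{|\sin \pi t|} + \log^{+}|G_p(k,t)|\le\log\frac{\pi}{|\sin \pi t|} + \tfrac12|G_p(k,t)|^2.$$
The first summand integrates over the edges to $\ll\varepsilon\log(1/\varepsilon)$ by the calculus estimate above.

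The crux is then the uniform second moment bound, which is where Proposition \ref{ProMomentsFiniteDistribution} enters. Applied with $J=1$ and $r_1=s_1=1$ it gives, uniformly in $t\in[0,1]$,
$$\frac1p\sum_{k\in\mathbb F_p}|G_p(k,t)|^2 = \ex|G_{\X}(t)|^2 + O\!\left(\frac{(\log p)^2}{\sqrt p}\right),$$
and a direct computation using the classical identity $\sum_{m\in\mathbb Z}(m-t)^{-2}=\pi^2/\sin^2\pi t$ shows $\ex|G_{\X}(t)|^2=1$. Hence $\frac1p\sum_k|G_p(k,t)|^2\ll1$ uniformly for $p$ large, so
$$\frac1p\sum_k\int_0^\varepsilon\tfrac12|G_p(k,t)|^2\,dt\ll\varepsilon.$$
Collecting the three contributions yields \eqref{eq: logtrunc-01-1} on $[0,\varepsilon]$, and $[1-\varepsilon,1]$ is handled identically. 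For the random estimate \eqref{eq: logtrunc-01-2} the very same decomposition applies verbatim with $\frac1p\sum_k$ replaced by $\ex$ and $G_p$ by $G_{\X}$, the role of the uniform moment bound being played by the exact identity $\ex|G_{\X}(t)|^2=1$.

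I expect the only genuine subtlety to be the following trap: $H_p$ truly blows up like $1/t$ as $t\to0$, so a naive bound $\log|H_p|\ll_q|H_p|^q$ would integrate to $\varepsilon^{1-q}$, which is far larger than the target $\varepsilon\log(1/\varepsilon)$. The decisive move is therefore to separate off the deterministic factor $\pi/|\sin \pi t|$, whose logarithm is of size $\log(1/t)$ and integrates correctly, and to reduce the stochastic part to $\log^{+}|G_p|$, where $G_p$ enjoys bounded second moments uniformly in $t$; the indicator $\mathbbm{1}_{|H_p|\ge\varepsilon}$ in the hypothesis is exactly what is needed to tame the lower tail.
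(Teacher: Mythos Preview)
Your proof is correct and takes a genuinely different route from the paper, though both rest on the same philosophy: factor $H_p(k,t)$ as a deterministic singular piece times a bounded-in-mean piece, integrate the logarithm of the singular factor directly to get $\varepsilon\log(1/\varepsilon)$, and control the rest by a second moment. The differences are in execution.

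The paper multiplies by $t$ to form $L_p(k,t)=tH_p(k,t)$, splits according to $|L_p|\le 1$ versus $|L_p|>1$, and on the large set bounds $\max_{t\le 1/2}|L_p(k,t)|$ in terms of the sum $\big|\sum_{1\le |m|\le (p-1)/2}\big(\tfrac{k-m}{p}\big)\tfrac{1}{m}\big|$; the average over $k$ is then handled by Jensen's inequality for $\log(x+C)$ together with the elementary Lemma~\ref{quadsumLegendre}. You instead multiply by $|e(t)-1|/(2\pi)$ to return to $G_p(k,t)$, split $\log|H_p|\,\mathbbm{1}_{|H_p|\ge\varepsilon}$ as $\log^{+}|H_p|$ plus a piece bounded pointwise by $\log(1/\varepsilon)$, and use $\log^{+}|G_p|\le\tfrac12|G_p|^2$; the uniform bound $\tfrac{1}{p}\sum_k|G_p(k,t)|^2\ll 1$ then does the work.

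Your argument is shorter and, via the identity $\ex|G_{\X}(t)|^2=1$, makes the random estimate \eqref{eq: logtrunc-01-2} an immediate byproduct. One minor remark: citing Proposition~\ref{ProMomentsFiniteDistribution} for the second moment is overkill, since that proposition invokes the Weil bound, whereas the case $r=s=1$ needs only Lemma~\ref{quadsumLegendre} (as in the paper's argument). Replacing that citation by a direct appeal to Lemma~\ref{quadsumLegendre} would make your proof strictly more elementary than the paper's, while retaining its cleaner structure.
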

\begin{proof} We will only prove \eqref{eq: logtrunc-01-1} since the proof of \eqref{eq: logtrunc-01-2} is similar. Moreover, we shall only consider the first part involving the integral  $\int_{0}^{\varepsilon}$ since an analogous bound for the second part (over the  integral $\int_{1-\varepsilon}^1$) follows along the same lines.

Let $L_p(k, t):= t H_p(k, t).$ We shall split the inner integral from $0$ to $\varepsilon$ in \eqref{eq: logtrunc-01-1} into two parts depending on whether $|L_p(k, t)|$ is ``large" or not. More precisely we have 
\begin{equation}\label{eq: intlogsplit}
\begin{aligned}
\int_{0}^{\varepsilon} \log|H_p(k,t)|\mathbbm{1}_{|H_{p}(k,t)|\geq \varepsilon} dt &= \int_{0}^{\varepsilon} \log|H_p(k,t)|\mathbbm{1}_{|H_{p}(k,t)|\geq \varepsilon}\mathbbm{1}_{|L_{p}(k,t)|> 1} dt \\
&  \quad + \int_{0}^{\varepsilon} \log|H_p(k,t)|\mathbbm{1}_{|H_{p}(k,t)|\geq \varepsilon}\mathbbm{1}_{|L_{p}(k,t)|\leq 1} dt.
\end{aligned}
\end{equation}
We begin by estimating the second integral. First, note that 
$$ \int_{0}^{\varepsilon} \log|H_p(k,t)|\mathbbm{1}_{|H_{p}(k,t)|\geq \varepsilon}\mathbbm{1}_{|L_{p}(k,t)|\leq 1} dt \geq \log(\varepsilon) \int_{0}^{\varepsilon} \mathbbm{1}_{|H_{p}(k,t)|\geq \varepsilon}\mathbbm{1}_{|L_{p}(k,t)|\leq 1} dt \geq \varepsilon \log(\varepsilon), 
$$
since $\log\varepsilon< 0$. On the other hand we have 
\begin{align*}
&\int_{0}^{\varepsilon} \log|H_p(k,t)|\mathbbm{1}_{|H_{p}(k,t)|\geq \varepsilon}\mathbbm{1}_{|L_{p}(k,t)|\leq 1} dt \\ 
= & \int_{0}^{\varepsilon} \log|L_p(k,t)|\mathbbm{1}_{|H_{p}(k,t)|\geq \varepsilon}\mathbbm{1}_{|L_{p}(k,t)|\leq 1} dt 
+O\left(\int_{0}^{\varepsilon}| \log t|dt\right)\\
\leq  & \ C_1\varepsilon \log (1/\varepsilon), 
\end{align*}
for some positive constant $C_1$. Combining these estimates implies that 
\begin{equation}\label{eq: intlog_small}
\int_{0}^{\varepsilon} \log|H_p(k,t)|\mathbbm{1}_{|H_{p}(k,t)|\geq \varepsilon}\mathbbm{1}_{|L_{p}(k,t)|\leq 1} dt \ll 
\varepsilon \log(1/\varepsilon).
\end{equation}
We now investigate the first integral on the right hand side of \eqref{eq: intlogsplit}. Since $|H_p(k, t)|\geq |L_p(k,t)|$ for all $t\in [0, 1]$, we get for $\varepsilon <1$ 
\begin{align}\label{eq:intlog_drop}\int_{0}^{\varepsilon} \log|H_p(k,t)|\mathbbm{1}_{|H_{p}(k,t)|\geq \varepsilon}\mathbbm{1}_{|L_{p}(k,t)|>1} dt &=\int_{0}^{\varepsilon} \log|H_p(k,t)|\mathbbm{1}_{|L_{p}(k,t)|> 1} dt \nonumber \\
& = \int_{0}^{\varepsilon} \log|L_p(k,t)|\mathbbm{1}_{|L_{p}(k,t)|> 1} dt + O\left(\varepsilon \log(1/\varepsilon)\right).
\end{align}
Note that the integral on the main term of this last estimate is non-negative. Furthermore, it follows from \eqref{ApproxCoefProcess} that uniformly for $t\in[0,1/2]$
$$L_p(k, t)=t \sum_{|m|\leq (p-1)/2} \left(\frac{k-m}{p}\right) \frac{1}{m-t} +O(1)= t\sum_{1\leq |m|\leq (p-1)/2} \left(\frac{k-m}{p}\right)\frac{1}{m} +O(1).$$
Therefore, there exists a positive constant $C_2$ such that 
$$ \max_{t\in [0, 1/2]} |L_p(k, t)|\leq 
\Big|\sum_{1\leq |m|\leq (p-1)/2} \left(\frac{k-m}{p}\right)\frac{1}{m}\Big|+ C_2 \leq  \Big|\sum_{1\leq |m|\leq (p-1)/2} \left(\frac{k-m}{p}\right)\frac{1}{m}\Big|^2+ C_3,$$
where $C_3=C_2+1.$
Hence, we deduce that 
\begin{align}\label{eq: intlog_large}
0 & \leq \int_{0}^{\varepsilon} \log|L_p(k,t)|\mathbbm{1}_{|L_{p}(k,t)|>1} dt \nonumber \\
& \leq \varepsilon \log\left(\Big|\sum_{1\leq |m|\leq (p-1)/2} \left(\frac{k-m}{p}\right)\frac{1}{m}\Big|^2+ C_3\right).
\end{align}
Since the function $x\to \log(x+C_3)$ is concave on $[0, \infty)$ it follows from Jensen's inequality that 
\begin{equation}\label{eq: ConcaveJenssen}
\begin{aligned}
    &\frac{1}{p}\sum_{k=0}^{p-1}\log\left(\Big|\sum_{1\leq |m|\leq (p-1)/2} \left(\frac{k-m}{p}\right)\frac{1}{m}\Big|^2+ C_3\right) \\
    &\leq \log\left(\frac{1}{p}\sum_{k=0}^{p-1}\Big|\sum_{1\leq |m|\leq (p-1)/2} \left(\frac{k-m}{p}\right)\frac{1}{m}\Big|^2+C_3\right).
\end{aligned}
\end{equation}
Finally, by Lemma \ref{quadsumLegendre} we obtain
\begin{align*}
\frac{1}{p}\sum_{k=0}^{p-1}\Big|\sum_{1\leq |m|\leq (p-1)/2} \left(\frac{k-m}{p}\right)\frac{1}{m}\Big|^2& = \sum_{1\leq |m_1|, |m_2|\leq (p-1)/2} \frac{1}{m_1m_2} \frac{1}{p} \sum_{k=0}^{p-1} \left(\frac{k-m_1}{p}\right)\left(\frac{k-m_2}{p}\right)\\
& \ll \sum_{1\leq |m|\leq (p-1)/2} \frac{1}{m^2} + \frac{1}{p} \left(\sum_{1\leq |m|\leq (p-1)/2} \frac{1}{m}\right)^2 \ll 1.
\end{align*}
Combining this with the estimates  \eqref{eq:intlog_drop}, \eqref{eq: intlog_large} and \eqref{eq: ConcaveJenssen} yields
$$
\frac{1}{p}\sum_{k=0}^{p-1}\int_{0}^{\varepsilon} \log|H_p(k,t)|\mathbbm{1}_{|H_{p}(k,t)|\geq \varepsilon}\mathbbm{1}_{|L_{p}(k,t)|>1} dt \ll \varepsilon\log(1/\varepsilon). 
$$
This together with \eqref{eq: intlogsplit} and \eqref{eq: intlog_small} completes the proof.
\end{proof}

\begin{proof}[Proof of Theorem \ref{cor: Mahnorms}]

Fix $\varepsilon>0$ and record that $$
\lim_{p\to\infty}\frac{1}{p}\sum_{0\le k\le p-1}\int_{\varepsilon}^{1-\varepsilon}\log (|{H}_p(k,t)|)\mathbbm{1}_{|{H}_p(k,t)|\ge \varepsilon}dt=\int_{\varepsilon}^{1-\varepsilon}\mathbb{E}\big(\log (|{H}_{\mathbb{X}}(t)|)\mathbbm{1}_{|{H}_{\mathbb{X}}(t)|\ge \varepsilon}\big)dt.
$$
Furthermore, it follows from Lemmas \ref{lem: log-trunc} and \ref{lem: logtrunc-01} that 
$$\lim_{p\to\infty}\frac{1}{p}\sum_{0\le k\le p-1}\int_0^1\log(|{H}_p(k,t)|)dt=\int_{0}^{1}\mathbb{E}\big(\log(|{H}_{\mathbb{X}}(t)|)\big)dt+O\left(\varepsilon^{5/6}\right).$$
Letting $\varepsilon\to 0$ and adding $\int_0^1 \log|e(t)-1|dt -\log(2\pi)$ to both sides completes the proof of Theorem \ref{cor: Mahnorms}.

\end{proof}
\begin{rem}\label{rem:cst} We can effectively compute the constant $k_0$ as:
\begin{equation*}\label{computation-cst} 
k_0= \lim_{J \rightarrow \infty} \frac{1}{2^{J+1}} \sum_{\epsilon_j \in \{-1,1\}^{J+1}} \int_{0}^{1} \log \left\vert \sum_{\vert j\vert \leq J} \frac{\epsilon_j}{j-t}\right\vert dt.
\end{equation*} Indeed, for any $J\geq 1$, we can define the random approximate process
\begin{equation*}\label{def:approxprocess}
G_{\X}^{J}(t)= \sum_{m\in \mathbb{Z}, \vert m\vert \leq J} \frac{e(t)-1}{2\pi i (m-t)} \X(m), \hspace{2mm}t\in[0,1],
\end{equation*} and note that
\begin{equation*}\label{limit-cst}
\ex \left(\int_{0}^{1}\log \vert G_{\X}(t)\vert dt\right)= \lim_{J \rightarrow  \infty}\ex \left( \int_{0}^{1}\log \vert G_{\X}^{J}(t)\vert dt\right).
\end{equation*} Indeed, following our proof of Theorem \ref{convprocess} (using in particular the same computation made to derive \eqref{SecondMomentApprox1}), we can show that the sequence of random processes $(G_{\X}^{J}(t))_{t\in [0,1]}$ converges in law to the process $(G_{\X}(t))_{t \in [0,1]}$ in $C([0,1])$. It is then straightforward to see that the arguments used to prove Lemmas \ref{lem: log-trunc} and \ref{lem: logtrunc-01} apply mutatis mutandis to $H_{\X}^{J}=\frac{2\pi i}{e(t)-1} G_{\X}^{J}.$

\end{rem}


\section*{Acknowledgements}
The authors would like to express their gratitude to Andrew Granville for extremely fruitful discussions about Fekete polynomials that, in particular, led to the proof of Theorem \ref{cor: Mahnorms}.
We are also grateful to Joseph Najnudel for valuable discussions.
MM  would like to thank the Max Planck Institute for
Mathematics, Bonn for the hospitality during his work on this project. MM also acknowledges support by the Austrian Science Fund (FWF), stand-alone project P 33043  ``Character sums, L-functions and applications'' and by the Ministero della Istruzione e della Ricerca ``Young Researchers Program Rita Levi Montalcini''. Part of this work was completed while YL was on a D\'el\'egation CNRS at the
IRL3457 CRM-CNRS in Montr\'eal. He  would like to thank the CNRS for its support and
the Centre de Recherches Math\'ematiques for its excellent working conditions. Finally, the authors would like to thank the Heilbronn Focused Research grant scheme for support.

\bibliographystyle{plain}
\bibliography{fekete}

\begin{thebibliography}{10}

\bibitem{BaMo}
R.~C. Baker and H.~L. Montgomery.
\newblock Oscillations of quadratic {$L$}-functions.
\newblock In {\em Analytic number theory ({A}llerton {P}ark, {IL}, 1989)},
  volume~85 of {\em Progr. Math.}, pages 23--40. Birkh\"{a}user Boston, Boston,
  MA, 1990.

\bibitem{flatexist}
P.~Balister, B.~Bollob{\'a}s, R.~Morris, J.~Sahasrabudhe, and M.~Tiba.
\newblock Flat {L}ittlewood polynomials exist.
\newblock {\em Ann. Math}, 192(3):977--1004, 2020.

\bibitem{BPW}
P.~Bateman, G.~Purdy, and S.~Wagstaff.
\newblock Some numerical results on {F}ekete polynomials.
\newblock {\em Math. Comput.}, 29:7--23, 1975.
\newblock Collection of articles dedicated to Derrick Henry Lehmer on the
  occasion of his seventieth birthday.

\bibitem{beck}
J.~Beck.
\newblock Flat polynomials on the unit circle---note on a problem of
  {L}ittlewood.
\newblock {\em Bull. London Math. Soc.}, 23(3):269--277, 1991.

\bibitem{BSh}
J.~Bell and I.~Shparlinski.
\newblock Power series approximations to {F}ekete polynomials.
\newblock {\em J. Approx. Theory}, 222:132--142, 2017.

\bibitem{B-N-R}
J.~Benatar, A.~Nishry, and B.~Rodgers.
\newblock Moments of polynomials with random multiplicative coefficients.
\newblock {\em Mathematika}, 68(1):191--216, 2022.

\bibitem{Bernstein}
S.~N. Bernstein.
\newblock Sur l’ordre de la meilleure approximation des fonctions continues
  par les polynômes de degré donné.
\newblock {\em Mémoires publiés par la Classe des Sciences de l’Académie
  de Belgique}, 4, 1912.

\bibitem{Bill}
P.~Billingsley.
\newblock {\em Convergence of probability measures}.
\newblock Wiley Series in Probability and Statistics: Probability and
  Statistics. John Wiley \& Sons, Inc., New York, second edition, 1999.
\newblock A Wiley-Interscience Publication.

\bibitem{Borweinexcursion}
P.~Borwein.
\newblock {\em Computational Excursions in Analysis and Number Theory}.
\newblock Springer, 2022.

\bibitem{BC}
P.~Borwein and K.~S. Choi.
\newblock Explicit merit factor formulae for {F}ekete and {T}uryn polynomials.
\newblock {\em Trans. Amer. Math. Soc.}, 354(1):219--234, 2002.

\bibitem{borweinBarker}
P.~Borwein, K.~S. Choi, and J.~Jankauskas.
\newblock On a class of polynomials related to {B}arker sequences.
\newblock {\em Proc. Amer. Math. Soc.}, 140(8):2613--2625, 2012.

\bibitem{borwein2001extremal}
P.~Borwein, K.~S. Choi, and S.~Yazdani.
\newblock An extremal property of {F}ekete polynomials.
\newblock {\em Proc. Amer. Soc.}, 129(1):19--27, 2001.

\bibitem{BC-merit}
P.~Borwein and S.~Choi.
\newblock Merit factors of polynomials formed by {J}acobi symbols.
\newblock {\em Canad. J. Math.}, 53(1):33--50, 2001.

\bibitem{Bo-Lo}
P.~Borwein and R.~Lockhart.
\newblock The expected {$L_p$} norm of random polynomials.
\newblock {\em Proc. Amer. Math. Soc.}, 129(5):1463--1472, 2001.

\bibitem{BBflat}
J.~Bourgain and E.~Bombieri.
\newblock On {K}ahane's ultraflat polynomials.
\newblock {\em J. Eur. Math. Soc.}, 11(3):627--703, 2009.

\bibitem{Aveg-Mah}
K.~S. Choi and T.~Erd\'{e}lyi.
\newblock Average {M}ahler's measure and {$L_p$} norms of {L}ittlewood
  polynomials.
\newblock {\em Proc. Amer. Math. Soc. Ser. B}, 1:105--120, 2014.

\bibitem{Chow}
S.~Chowla.
\newblock Note on a {D}irichlet {L}- function.
\newblock {\em Acta Arith.}, 1(1):113--114, 1936.

\bibitem{CGPS}
B.~Conrey, A.~Granville, B.~Poonen, and K.~Soundararajan.
\newblock Zeros of {F}ekete polynomials.
\newblock {\em Ann. Inst. Fourier (Grenoble)}, 50(3):865--889, 2000.

\bibitem{ConreySound}
J.~B. Conrey and K.~Soundararajan.
\newblock Real zeros of quadratic {D}irichlet {$L$}-functions.
\newblock {\em Invent. Math.}, 150(1):1--44, 2002.

\bibitem{E-Lpgrowth}
T.~Erd\'{e}lyi.
\newblock Upper bounds for the {$L_q$} norm of {F}ekete polynomials on subarcs.
\newblock {\em Acta Arith.}, 153(1):81--91, 2012.

\bibitem{Lower-Mah}
T.~Erd\'{e}lyi.
\newblock Improved lower bound for the {M}ahler measure of the {F}ekete
  polynomials.
\newblock {\em Constr. Approx.}, 48(2):283--299, 2018.

\bibitem{ErdMahler1}
T.~Erd\'{e}lyi.
\newblock The asymptotic value of the {M}ahler measure of the {R}udin-{S}hapiro
  polynomials.
\newblock {\em J. Anal. Math.}, 142(2):521--537, 2020.

\bibitem{Erdunitcircle}
T.~Erd\'{e}lyi.
\newblock On the oscillation of the modulus of the {R}udin-{S}hapiro
  polynomials on the unit circle.
\newblock {\em Mathematika}, 66(1):144--160, 2020.

\bibitem{ErdSurv}
T.~Erd\'{e}lyi.
\newblock Recent progress in the study of polynomials with constrained
  coefficients.
\newblock In {\em Trigonometric sums and their applications}, pages 29--69.
  Springer, Cham, 2020.

\bibitem{LE}
T.~Erd\'{e}lyi and D.~S. Lubinsky.
\newblock Large sieve inequalities via subharmonic methods and the {M}ahler
  measure of the {F}ekete polynomials.
\newblock {\em Canad. J. Math.}, 59(4):730--741, 2007.

\bibitem{Erdospb}
P.~Erd\H{o}s.
\newblock Some unsolved problems.
\newblock {\em Michigan Math. J.}, 4:291--300, 1957.

\bibitem{FeketePolya}
M.~Fekete and G.~P{\'o}lya.
\newblock {\"U}ber ein problem von {L}aguerre.
\newblock {\em Rendiconti del Circolo Matematico di Palermo (1884-1940)},
  34(1):89--120, 1912.

\bibitem{GS-Lp}
C.~G\"{u}nther and K-U. Schmidt.
\newblock {$L^q$} norms of {F}ekete and related polynomials.
\newblock {\em Canad. J. Math.}, 69(4):807--825, 2017.

\bibitem{Harp-moving-character}
A.~Harper.
\newblock A note on character sums over short moving intervals.
\newblock {\em https://arxiv.org/abs/2203.09448}, 2022.

\bibitem{Heilbr}
H.~Heilbronn.
\newblock On real characters.
\newblock {\em Acta Arith.}, 2:212--213, 1936.

\bibitem{Hoholdtmerit}
T.~Hoholdt and H.~E. Jensen.
\newblock Determination of the merit factor of {L}egendre sequences.
\newblock {\em IEEE Transactions on Information Theory}, 34(1):161--164, 1988.

\bibitem{Rud-Hugh}
C.~P. Hughes and Z.~Rudnick.
\newblock On the distribution of lattice points in thin annuli.
\newblock {\em Int. Math. Res. Not.}, (13):637--658, 2004.

\bibitem{IwKo}
H.~Iwaniec and E.~Kowalski.
\newblock {\em Analytic number theory}, volume~53 of {\em American Mathematical
  Society Colloquium Publications}.
\newblock American Mathematical Society, Providence, RI, 2004.

\bibitem{JKS}
J.~Jedwab, D.~J. Katz, and K-U. Schmidt.
\newblock Advances in the merit factor problem for binary sequences.
\newblock {\em Journal of Combinatorial Theory, Series A}, 120(4):882--906,
  2013.

\bibitem{jedwab2013littlewood}
J.~Jedwab, D.~J Katz, and K-U. Schmidt.
\newblock Littlewood polynomials with small ${L}^4$ norm.
\newblock {\em Advances in Mathematics}, 241:127--136, 2013.

\bibitem{JHH}
J.~M. Jensen, H.~E. Jensen, and T.~Hoholdt.
\newblock The merit factor of binary sequences related to difference sets.
\newblock {\em IEEE Transactions on Information Theory}, 37(3):617--626, 1991.

\bibitem{Kahane}
J-P. Kahane.
\newblock Sur les polyn{\^o}mes {\`a} coefficients unimodulaires.
\newblock {\em Bull. Lond. Math. Soc.}, 12(5):321--342, 1980.

\bibitem{Kon-Sch}
S.~V. Konyagin and W.~Schlag.
\newblock Lower bounds for the absolute value of random polynomials on a
  neighborhood of the unit circle.
\newblock {\em Trans. Amer. Math. Soc.}, 351(12):4963--4980, 1999.

\bibitem{KoBook}
E.~Kowalski.
\newblock {\em An introduction to probabilistic number theory}, volume 192 of
  {\em Cambridge Studies in Advanced Mathematics}.
\newblock Cambridge University Press, Cambridge, 2021.

\bibitem{KowSaw}
E.~Kowalski and W.~Sawin.
\newblock Kloosterman paths and the shape of exponential sums.
\newblock {\em Compositio Math.}, 152(7):1489--1516, 2016.

\bibitem{Littlewood-Mahler}
J.~E. Littlewood.
\newblock The real zeros and value distributions of real trigonometrical
  polynomials.
\newblock {\em J. London Math. Soc.}, 41:336--342, 1966.

\bibitem{littlewood1968some}
J.~E. Littlewood.
\newblock {\em Some problems in real and complex analysis}.
\newblock DC Heath, 1968.

\bibitem{mont}
H.~L. Montgomery.
\newblock {\em Topics in multiplicative number theory}.
\newblock Lecture Notes in Mathematics, Vol. 227. Springer-Verlag, Berlin-New
  York, 1971.

\bibitem{Mont-large}
H.~L. Montgomery.
\newblock An exponential polynomial formed with the {L}egendre symbol.
\newblock {\em Acta Arith.}, 37:375--380, 1980.

\bibitem{Mont-Litt}
H.~L Montgomery.
\newblock Littlewood polynomials.
\newblock In {\em Analytic Number Theory, Modular Forms and q-Hypergeometric
  Series: In Honor of Krishna Alladi's 60th Birthday, University of Florida,
  Gainesville, March 2016}, pages 533--553. Springer, 2018.

\bibitem{NNS}
F.~Nazarov, A.~Nishry, and M.~Sodin.
\newblock Log-integrability of {R}ademacher {F}ourier series, with applications
  to random analytic functions.
\newblock {\em Algebra i Analiz}, 25(3):147--184, 2013.

\bibitem{newman}
D.~J. Newman.
\newblock An {L$^1$} extremal problem for polynomials.
\newblock {\em Proc. Amer. Math. Soc.}, 16(6):1287--1290, 1965.

\bibitem{odlyzko}
A.~Odlyzko.
\newblock Search for ultraflat polynomials with plus and minus one
  coefficients.
\newblock In {\em Connections in discrete mathematics}, pages 39--55, 2018.

\bibitem{Rodgers}
B.~Rodgers.
\newblock On the distribution of {R}udin-{S}hapiro polynomials and lacunary
  walks on {$SU(2)$}.
\newblock {\em Adv. Math.}, 320:993--1008, 2017.

\bibitem{saffariRudin}
B.~Saffari.
\newblock Une fonction extr{\'e}male li{\'e}e {\`a} la suite de
  {R}udin--{S}hapiro.
\newblock {\em CR Acad. Sci. Paris S{\'e}r. I Math}, 303(4):97--100, 1986.

\bibitem{S-Z}
R.~Salem and A.~Zygmund.
\newblock Some properties of trigonometric series whose terms have random
  signs.
\newblock {\em Acta Math.}, 91:245--301, 1954.

\end{thebibliography}
\nocite{KowSaw}

\end{document}